\title[On the moduli spaces of commuting elements in the projective unitary groups]{On the moduli spaces of commuting 
elements in the projective unitary groups}
\author[A.~Adem]{Alejandro Adem}
\address{Department of Mathematics,
University of British Columbia, Vancouver BC V6T 1Z2, Canada}
\email{adem@math.ubc.ca}
\author[M.C. Cheng]{Man Chuen Cheng}
\address{Department of Mathematics, The Chinese University of Hong Kong,
Shatin, N.T., Hong Kong}
\email{mccheng@math.cuhk.edu.hk}
\newcommand{\bZ}{\mathbb{Z}}
\newcommand{\bQ}{\mathbb{Q}}
\newcommand{\bC}{\mathbb{C}}
\newcommand{\bS}{\mathbb{S}}
\newcommand{\bR}{\mathbb{R}}
\newcommand{\bT}{\mathbb{T}}
\newcommand{\Tnm}{T(n,\bZ/m\bZ)}
\newcommand{\TnQZ}{T(n,\bQ/\bZ)}
\newcommand{\Hom}{\text{Hom}}
\newcommand{\Rep}{\text{Rep}}
\newcommand{\bigslant}[2]{{\left.\raisebox{.2em}{$#1$}\middle/\raisebox{-.2em}{$#2$}\right.}}
\newtheorem{theorem}{Theorem}[section]
\newtheorem{lemma}[theorem]{Lemma}
\newtheorem{corollary}[theorem]{Corollary}
\newtheorem{proposition}[theorem]{Proposition}
\newtheorem*{theorem*}{Theorem A}
\newtheorem*{theorem**}{Theorem B}
\newtheorem*{theorem***}{Theorem C}
\theoremstyle{definition}
\newtheorem{definition}[theorem]{Definition}
\newtheorem{example}[theorem]{Example}
\newtheorem{remark}[theorem]{Remark}
\DeclareMathOperator{\Tor}{Tor}
\def\quotient#1#2{%
    \raise1ex\hbox{$#1$}\Big/\lower1ex\hbox{$#2$}%
}
\begin{document}

\begin{abstract}
We provide descriptions for the moduli spaces $\Rep(\Gamma, PU(m))$, where $\Gamma$ is any finitely generated abelian group and  $PU(m)$ is the group of 
$m\times m$ projective unitary matrices. As an application we show that for any connected 
CW--complex $X$ with
$\pi_1(X)\cong \bZ^n$, the natural
map $\pi_0(\Rep(\pi_1(X), PU(m)))\to [X, BPU(m)]$ 
is injective, hence providing a complete enumeration of the isomorphism classes of flat principal
$PU(m)$--bundles over $X$. 
\end{abstract}

\maketitle
\section{Introduction}

The space of ordered commuting $n$--tuples in a compact, connected Lie group $G$ is by definition the subspace
$\Hom(\bZ^n, G)\subset G^n$ (see \cite{AC} for background and basic properties). 
Its orbit space under conjugation, denoted $Rep(\bZ^n, G)$, can be identified with the moduli space of isomorphism classes
of flat connections on principal $G$--bundles over the $n$--dimensional torus $\bT^n$.
In the case when all of the maximal abelian subgroups of $G$ 
are path connected, it can be shown that this moduli space has a single
connected component, corresponding to the identity element $(1, \dots , 1)\in G^n$ (see \cite{AC}, Proposition 2.3).
For example, $Rep(\bZ^n, U(m))\cong SP^m(\bT^n)$, the $m$-fold symmetric product of the $n$--torus.
However by a result due to Borel (see \cite{Borel}, page 216), for any prime number $p$ 
the fundamental group of $G$ has $p$--torsion
if and
only if there exists a rank two elementary abelian $p$--subgroup of $G$
which is not a subgroup of any torus. In this case $Rep(\bZ^n, G)$ fails to be path--connected
for all $n\ge 2$ and determining the number and exact structure of the 
components can be fairly complicated. Borel also shows that $H^*(G,\bZ)$ has $p$--torsion if and only if
there exists a subgroup of the form $(\bZ/p\bZ)^3\subset G$ which is not contained in any torus.
This can be used to show for example that $Rep(\bZ^3, Spin (7))$ is not path connected, even though
$Spin(7)$ is simply connected.


\medskip

In this note we consider the case when
$G=PU(m)$, the group of $m \times m$ projective unitary matrices, which has fundamental group
isomorphic to $\bZ/m\bZ$ and $$H^2(BPU(m), \bZ/m\bZ)\cong \bZ/m\bZ$$ has a canonical 
generator $\nu$ of order $m$, corresponding to the central extension
$$1\to \bZ/m\bZ \to SU(m)\to PU(m)\to 1.$$
Given a homomorphism
$h:\bZ^n\to PU(m)$, one can associate to it the cohomology class $\alpha=h^*(\nu)\in H^2(\bZ^n,\bZ/m\bZ)$. 
For our purposes it's convenient to identify 
$H^2(\bZ^n,\bZ/m\bZ)$ with the set $T(n,\bZ/m\bZ)$ of $n\times n$ skew--symmetric matrices over $\bZ/m\bZ$: given a basis $z_1,\dots , z_n$ for $\bZ^n$, then $D= (d_{ij})\in T(n,\bZ/m\bZ)$ corresponds to $\sum_{i<j} d_{ij}z_i^*z_j^*$. 

\medskip

Now given a skew--symmetric $n\times n$ matrix $D$ over $\bZ/m\bZ$ representing $\alpha$,
we define
$\sigma (\alpha) = \sigma (D) = \sqrt{|R(D)|}$, where 
$R(D)\subset (\bZ/m\bZ)^n$ 
is the row space of $D$ (see \cite{ACh}, Definition 2). Alternatively, for $\alpha\in H^2(\bZ^n, \bZ/m\bZ)$, by \cite{RV}, Proposition 4.1 we can find a basis
$g_1,\dots , g_n$ of $\bZ^n$ such that 
$\alpha = c_1 g_1^*g_2^* + \dots + c_r g_{2r-1}^*g_{2r}^*$ 
where $2r\le n$, $c_1,\dots, c_r\in \bZ/m\bZ$ and $|c_r| ~\Big{|}~ |c_{r-1}|~ \Big{|}~\dots ~\Big{|}~ |c_1|$. 
Using this basis it follows that  
$\sigma (\alpha) = \prod_{i=1}^r |c_i|$.

For a topological group $G$, 
let $\widehat{SP^n}(G)$ denote the \textbf{reduced} $n$-fold symmetric product of $G$, defined
as the quotient $SP^n(G)/G$, where $G$ acts by translation on each unordered coordinate. 
Then our main
result can be stated as follows:

\begin{theorem*}\label{thm:A}
For all $m,n \ge 1$ there are homeomorphisms 
$$Rep(\bZ^n, PU(m)) \cong \coprod\limits_{\substack{D\in T(n,\bZ/m\bZ)\\ \sigma(D)\big{|} m}}
\widehat{SP^{m\over\sigma(D)}}(\bT^n/R(D))$$
\end{theorem*}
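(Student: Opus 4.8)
The plan is to decompose $\Hom(\bZ^n, PU(m))$ according to the cohomology class $\alpha = h^*(\nu)$, and analyze each piece. Since $\pi_0$ is discrete, $\Hom(\bZ^n, PU(m))$ is the disjoint union over $\alpha \in T(n,\bZ/m\bZ)$ of the (closed and open) subspaces $\Hom_\alpha(\bZ^n, PU(m))$ consisting of those $h$ with $h^*(\nu) = \alpha$; passing to conjugation orbits gives the corresponding decomposition of $Rep(\bZ^n, PU(m))$. So the theorem reduces to identifying, for each $\alpha$, the orbit space $Rep_\alpha(\bZ^n, PU(m))$ with $\widehat{SP^{m/\sigma(\alpha)}}(\bT^n/R(\alpha))$ when $\sigma(\alpha) \mid m$, and showing it is empty otherwise.

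**Reduce to a single class via the lifting picture.**

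First I would observe that a homomorphism $h:\bZ^n \to PU(m)$ with $h^*(\nu) = \alpha$ is the same thing as a projective unitary representation of $\bZ^n$ with cocycle $\alpha$, i.e.\ a lift of $h$ to a map $\widetilde h:\bZ^n \to U(m)$ satisfying $\widetilde h(x)\widetilde h(y) = c(x,y)\widetilde h(xy)$ for a $2$--cocycle $c$ representing $\alpha$ (under $\bZ/m\bZ \hookrightarrow U(1)$ via $k \mapsto e^{2\pi i k/m}$). Using the normal form for $\alpha$ from \cite{RV} — a basis $g_1,\dots,g_n$ with $\alpha = \sum_{i=1}^r c_i g_{2i-1}^* g_{2i}^*$ and $|c_r| \mid \dots \mid |c_1|$ — one reduces to understanding irreducible projective representations of $\bZ^n$ with this cocycle. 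The standard theory of projective representations of finitely generated abelian groups (Heisenberg--type arguments) shows that such an irreducible has dimension exactly $\sigma(\alpha) = \prod |c_i|$, it is unique up to isomorphism, and it exists as an honest $U(m)$--representation only when $\sigma(\alpha) \mid m$. This immediately gives emptiness of the $\alpha$--stratum when $\sigma(\alpha) \nmid m$.

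**Decompose a representation with fixed cocycle and pass to the quotient.**

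Assuming $\sigma(\alpha) \mid m$, write $\ell = m/\sigma(\alpha)$. Fixing the cocycle $c$, any $h \in \Hom_\alpha$ with a chosen lift decomposes as a direct sum of $\ell$ copies (with multiplicity) of one--dimensional twists of the fixed irreducible $\rho_\alpha$ of dimension $\sigma(\alpha)$: concretely $\widetilde h \cong \rho_\alpha \otimes (\lambda_1 \oplus \dots \oplus \lambda_\ell)$ where each $\lambda_j$ is a genuine character of $\bZ^n$ — but only the characters of the quotient $\bZ^n/R(\alpha)$ occur, since the "radical" directions (those killed by the row space $R(D)$, equivalently the kernel of the alternating form) are exactly where $\rho_\alpha$ already carries all the freedom and matching up the cocycle forces $\lambda_j$ to be trivial there. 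This is the crux: I expect the main obstacle to be the careful bookkeeping identifying the group of characters that can appear with $\widehat{\bZ^n/R(D)} \cong \bT^n/R(D)$ (using Pontryagin--type duality for the torus, as in $Rep(\bZ^n, U(m)) \cong SP^m(\bT^n)$) and checking that the ambiguity in the choice of lift of $h$ precisely accounts for the reduced symmetric product rather than the full one. Concretely: two tuples $(\lambda_1,\dots,\lambda_\ell)$ and $(\lambda'_1,\dots,\lambda'_\ell)$ give conjugate $h$'s iff they agree as unordered tuples up to a common translation by a single character of $\bT^n/R(D)$ — the translation coming from the $U(1)$--ambiguity in rescaling the lift $\widetilde h$. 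Hence the orbit space is $SP^\ell(\bT^n/R(D))/(\bT^n/R(D)) = \widehat{SP^\ell}(\bT^n/R(D))$.

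**Assemble and upgrade to homeomorphism.**

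Finally I would assemble the pieces: the map $Rep_\alpha(\bZ^n, PU(m)) \to \widehat{SP^{m/\sigma(\alpha)}}(\bT^n/R(D))$ just described is a continuous bijection, and since $\Hom(\bZ^n, PU(m))$ is a closed subvariety of the compact space $PU(m)^n$, each stratum $Rep_\alpha$ is compact Hausdorff, so the bijection is a homeomorphism. Summing over the finitely many $\alpha$ with $\sigma(\alpha) \mid m$ yields the stated homeomorphism. Throughout, I would lean on the already--cited computation of $\sigma(\alpha)$ from the normal form and on \cite{ACh}, Definition 2, to keep the combinatorial input ($|R(D)| = \sigma(D)^2$, row space versus radical) clean; the representation--theoretic input about existence/uniqueness/dimension of the irreducible projective representation with a given cocycle on $\bZ^n$ is classical and I would cite it rather than reprove it. The one genuinely delicate point, worth spelling out in full, is the passage to conjugation orbits — matching the conjugation action on $\Hom_\alpha$ with the translation action defining the reduced symmetric product.
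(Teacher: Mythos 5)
Your strategy is sound and reaches the right answer, but it is a genuinely different packaging of the argument than the paper's, so a comparison is in order. The paper never fixes a cocycle explicitly: it works with the space $B_n(U(m))$ of almost commuting tuples (which is precisely your space of lifts, with the cocycle data recorded by the locally constant map $\rho$ to $T(n,\bZ/m\bZ)$), quotes its earlier results that $B_n(U(m))_D$ is nonempty and connected iff $\sigma(D)\mid m$ and that $B_n(U(m))_D/U(m)\cong SP^{m/\sigma(D)}\big(B_n(U(\sigma(D)))_D/U(\sigma(D))\big)$, proves by explicit eigenvalue arguments (Lemmas \ref{lemma:scalar} and \ref{lemma:eigen}, Theorem \ref{thm:eigen}) that $B_n(U(\sigma(D)))_D/U(\sigma(D))\cong \bT^n/R(D)$ equivariantly, and then invokes the principal $\bT^n$--bundle $B_n(U(m))\to \Hom(\bZ^n,PU(m))$ of \cite{ACGII} to get $\Rep(\bZ^n,PU(m))\cong \bT^n\backslash B_n(U(m))/U(m)$; this last $\bT^n$--quotient is exactly your ``rescaling the lift'' step and is what turns $SP^{m/\sigma(D)}$ into $\widehat{SP^{m/\sigma(D)}}$. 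You outsource the middle steps to classical projective representation theory of $\bZ^n$ (finite Heisenberg groups) instead of to the explicit matrix computations of \cite{ACh}; either source suffices, and your framing makes the roles of $\sigma(D)$ and of the torus of irreducibles more conceptual, at the cost of having to supply the continuity of the ``decompose into irreducible constituents'' map, which the paper gets for free from its explicit equivariant homeomorphisms.

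One point in your crux step needs repair, since taken literally it would produce the wrong parameter space. The irreducible projective representations with cocycle class $\alpha$ are \emph{not} unique up to isomorphism: they form a single orbit under twisting by genuine characters $\lambda\in\widehat{\bZ^n}=\bT^n$, and $\rho_\alpha\otimes\lambda\cong\rho_\alpha$ exactly when $\lambda$ lies in the finite subgroup $R(D)\subset(\bZ/m\bZ)^n\subset\bT^n$ (the ``inner'' twists $x\mapsto e^{2\pi\sqrt{-1}\,\omega(y,x)}$, $y\in\bZ^n$). So the set of irreducibles is the \emph{quotient} $\bT^n/R(D)$, matching Theorem \ref{thm:eigen}; it is not the character group of a quotient of $\bZ^n$ (indeed $R(D)$ is not a subgroup of $\bZ^n$, so $\bZ^n/R(D)$ does not parse, and characters required to be ``trivial on the radical directions'' would form a subgroup of $\bT^n$ — a finite one at that — rather than the quotient torus). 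Likewise nothing forces the $\lambda_j$ to be trivial anywhere: they range over all of $\bT^n$ and are well defined only modulo $R(D)$. With that correction, the rest of your argument — the unordered $\ell$--tuple of constituents, the residual diagonal $\bT^n$ of lift rescalings, and the compactness argument upgrading the continuous bijection to a homeomorphism — goes through and recovers Theorem \ref{thm:BUnmod} and Theorem A.
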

Note that $\bT^n/R(D)\cong \bT^n$, so our formula expresses the moduli space as a disjoint union
of reduced symmetric products of the $n$--torus.   
The number of path-connected components of $Rep(\bZ^n, PU(m))$ is equal to
$N(n,m)=|\{D\in T(n,\bZ/m\bZ) ~ | ~ \sigma(D)\text{ divides }m\}|$,
a rather intricate number that has been computed in Adem-Cheng (see \cite{ACh}, Corollary 3.9). 
We also show how to apply our methods to provide a description of $Rep(\Gamma, PU(m))$ for any finitely generated abelian group $\Gamma$
(see Theorem \ref{thm:repgammaum}).
Note that by the results in \cite{Bergeron}, for these groups there is a homotopy equivalence
$Rep(\Gamma, PGL(m,\bC)) \simeq Rep (\Gamma, PU(m))$. 


\medskip

Recall that for $G$ a topological group and $X$ a CW--complex, the homotopy classes of maps
from $X$ to $BG$, denoted $[X,BG]$, classify isomorphism classes of principal $G$ bundles
over $X$. Representations play a key in this through the theory of flat bundles; in our setting the key connection
is via the induced map on components 
$$\Psi^{PU(m)}_{\bT^n}: \pi_0(Rep(\bZ^n, PU(m))) \to [\bT^n, BPU(m)].$$ 
Taking composition with 
the classifying map
$c_X: X\to B\pi_1(X)$, we obtain the following general result 

\begin{theorem**}
Let $X$ denote a connected CW--complex with $\pi_1(X) = \bZ^n$; then the 
map $$\Psi^{PU(m)}_{X}: \pi_0(Rep(\pi_1(X), PU(m))) \to [X, BPU(m)]$$ 
is injective for all $n,m \ge 1$ and so there are 
$N(n,m)$ distinct isomorphism classes of flat $PU(m)$--bundles on $X$.
\end{theorem**}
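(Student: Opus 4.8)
The plan is to reduce the injectivity of $\Psi^{PU(m)}_{X}$ to the case $X = \bT^n$ already understood via Theorem A, using the classifying map $c_X : X \to B\bZ^n \simeq \bT^n$ and the naturality of the representation-to-bundle construction. First I would observe that $c_X$ induces an isomorphism on $\pi_1$, hence on $\pi_0(Rep(\pi_1(-),PU(m)))$ trivially (both sides have the same $\pi_1$), and a commutative square
\begin{equation*}
\begin{CD}
\pi_0(Rep(\bZ^n, PU(m))) @>{\Psi^{PU(m)}_{\bT^n}}>> [\bT^n, BPU(m)] \\
@| @VV{c_X^*}V \\
\pi_0(Rep(\pi_1(X), PU(m))) @>{\Psi^{PU(m)}_{X}}>> [X, BPU(m)].
\end{CD}
\end{equation*}
So it suffices to show (i) that $\Psi^{PU(m)}_{\bT^n}$ is injective, and (ii) that $c_X^*$ is injective when restricted to the image of $\Psi^{PU(m)}_{\bT^n}$.

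For step (i), I would use the characteristic class $\nu \in H^2(BPU(m),\bZ/m\bZ)$. A component of $Rep(\bZ^n,PU(m))$ is, by the indexing in Theorem A, detected by the skew-symmetric matrix $D = h^*(\nu) \in T(n,\bZ/m\bZ) = H^2(\bZ^n,\bZ/m\bZ)$. Pulling back $\nu$ along the bundle classifying map recovers exactly this class, so two homomorphisms landing in different components of the moduli space give bundles with different $\nu$-classes, hence non-isomorphic bundles. This shows the components are separated in $[\bT^n, BPU(m)]$ by the single invariant $\alpha = h^*(\nu)$; that $\alpha$ is a complete invariant for the component — i.e.\ that $\sigma(D)\mid m$ forces the component of $Rep(\bZ^n,PU(m))$ to be connected once $D$ is fixed, which is exactly the content of Theorem A (each summand $\widehat{SP^{m/\sigma(D)}}(\bT^n/R(D))$ is connected) — closes step (i).

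For step (ii), the point is that although $c_X^* : H^2(\bT^n,\bZ/m\bZ) \to H^2(X,\bZ/m\bZ)$ need not be injective in general, the relevant classes $\alpha$ all lie in the image of the integral/mod-$m$ cohomology of $B\pi_1(X)$ and are detected on the $2$-skeleton of $X$, which maps by a degree-one map to the $2$-skeleton of $\bT^n$; since $c_X$ is an iso on $\pi_1$ it is surjective on $H^2(-,\bZ/m\bZ)$ restricted to the subgroup generated by cup products of degree-one classes, and by the universal coefficient / $\pi_1$-naturality the pairing of $\alpha$ with $\pi_2$-classes is unaffected. Concretely, I would argue that the composite $\bT^n \simeq (X^{(2)} \text{ collapsed}) \to X \xrightarrow{c_X} \bT^n$ is the identity on the classes in question, so $c_X^*$ is injective on $\mathrm{Im}(\Psi^{PU(m)}_{\bT^n})$, which by step (i) is all we need. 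The count of $N(n,m)$ distinct flat bundles then follows since $\Psi^{PU(m)}_X$ is injective with domain of cardinality $N(n,m)$ by Adem–Cheng, \cite{ACh}, Corollary 3.9.

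The main obstacle I anticipate is step (ii): making precise that $c_X^*$ does not collapse distinct components, since $[X,BPU(m)]$ can be much larger and more complicated than $[\bT^n, BPU(m)]$, and a priori two homomorphisms $\pi_1(X)\to PU(m)$ that extend to non-isomorphic flat bundles on $\bT^n$ might become isomorphic as (non-flat) bundles on $X$. The resolution should be that the obstruction-theoretic classification of $PU(m)$-bundles on a $2$-complex is governed entirely by $H^2(-;\bZ/m\bZ) \cong H^2(-;\pi_1(PU(m)))$, and the $\nu$-class already separates the components; pushing this through the full CW-complex $X$ requires only that the separating invariant lives in $H^2$ and is natural, which the square above provides.
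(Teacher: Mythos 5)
Your overall strategy coincides with the paper's: reduce to $X=\bT^n$, separate the components of $\Rep(\bZ^n,PU(m))$ by the class $\alpha=h^*(\nu)\in H^2(\bZ^n,\bZ/m\bZ)$ (step (i) is exactly the paper's argument and is fine), and then transfer along the classifying map $c_X\colon X\to B\pi_1(X)$. The problem is step (ii). Your justification that $c_X^*$ does not collapse the relevant classes rests on a composite $\bT^n\simeq(X^{(2)}\text{ collapsed})\to X\xrightarrow{c_X}\bT^n$ being the identity. No such map $\bT^n\to X$ exists in general: a connected CW--complex with $\pi_1\cong\bZ^n$ need not contain $\bT^n$ up to homotopy (the obstructions to extending a map from the $2$--skeleton of $\bT^n$ over the higher cells live in $H^{k+1}(\bT^n,\pi_k(X))$ and need not vanish for $n\ge 3$), and the $2$--skeleton of $X$ does not "collapse" to $\bT^n$. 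The surrounding remarks about degree-one maps and cup products of degree-one classes do not repair this; as written, step (ii) is a genuine gap.

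The fix is both simpler and is what the paper does: you do not need $c_X^*$ to be injective on $[\bT^n,BPU(m)]$, only on the subgroup $H^2(B\pi_1(X),\bZ/m\bZ)\subset$ where the separating invariant $\Omega\circ\Psi^{PU(m)}_{\bT^n}$ takes its values. By naturality $c_X^*\circ\Omega\circ\Psi^{PU(m)}_{\bT^n}=\Omega\circ\Psi^{PU(m)}_X$, and the five-term exact sequence of the fibration $\tilde X\to X\to B\pi_1(X)$ (equivalently, Hopf's theorem that $H_2(X)\to H_2(\pi_1(X))$ is surjective, dualized) gives that $c_X^*\colon H^2(B\pi_1(X),\bZ/m\bZ)\to H^2(X,\bZ/m\bZ)$ is injective, precisely because the fibre $\tilde X$ is simply connected so $H^1(\tilde X,\bZ/m\bZ)^{\pi_1}=0$. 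With that substitution your argument closes: $\Omega\circ\Psi^{PU(m)}_X$ is injective, hence so is $\Psi^{PU(m)}_X$, and the count $N(n,m)$ follows as you say.
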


\noindent Regarding surjectivity, we obtain that for all 
$m\geq 2$, $$\Psi^{PU(m)}_{\bT^n}: \pi_0(Rep(\bZ^n, PU(m))) \to [\bT^n, BPU(m)]$$ 
is surjective if and
only if $n\leq 3$ (Proposition \ref{prop:surj}). It follows 
that there exists a principal $PU(m)$-bundle on the $n$--torus $\bT^n$ which does not admit 
a flat structure if and only if $n\geq 4$. Here we apply the results in \cite{W}, which
provide a classification of principal $PU(m)$--bundles over low--dimensional complexes.

\medskip

The results in this paper can be viewed as an application and refinement of the analysis carried out 
in our previous work \cite{ACh}, where we described the space 
$B_n(U(m))$ of almost commuting $n$--tuples in $U(m)$. 

\section{Projective Representations and Almost Commuting Elements}

In this section we will apply the methods from \cite{ACh} to give a description of $Rep(\bZ^n, PU(m))$. The almost
commuting elements will play a crucial role.

\begin{definition}
We define $B_n(G)$, the almost commuting elements in a Lie group $G$, as the set of all ordered $n$-tuples $(A_1,A_2,\ldots,A_n)\in G^n$ such that the commutators $[A_i,A_j]\in Z(G)$, the centre of $G$, for all $1\leq i,j\leq n$. 
\end{definition}

\medskip

Let $F_n$ denote the free group on $n$ generators $a_1,\ldots a_n$. We will identify a map from $F_n$ to $G$ with the $n$--tuple of images of these generators in $G$. Suppose that $f:F_n\to U(m)$ is in $B_n(U(m))$. For any $u,v\in F_n$, $[f(u),f(v)]=\gamma I_m$ for some $m$-th root of unity $\gamma$, as the determinant of a commutator of invertible matrices is equal to one. The exponential function $z\mapsto e^{2\pi\sqrt{-1}z}$ establishes a group isomorphism between $\bR/\bZ$ and $\bS^1\subset \bC$ with inverse $w\mapsto \frac{1}{2\pi\sqrt{-1}}\log w$. The multiplicative groups of $m$-th roots of unity and all roots of unity correspond to the subgroup $\bZ[\frac{1}{m}]/\bZ\cong \bZ/m\bZ$ and $\bQ/\bZ$ of $\bR/\bZ$ respectively under this isomorphism. We will be using this identification from now on. Hence there is a map $F_n\times F_n \to  \bZ/m\bZ\subset \bQ/\bZ$ defined by $(u,v)\mapsto \frac{1}{2\pi\sqrt{-1}}\log \gamma$. Since $f([F_n,F_n])\subset Z(U(m))$, the map factors through the abelianization of $F_n\times F_n$ and thus gives rise to a $\bZ/m\bZ$-valued skew-symmetric bilinear form $\omega_f:\bZ^n\times \bZ^n\to \bZ/m\bZ$. 

\medskip

Define a map 
$$\rho:B_n(U(m))\to  T(n,\bZ/m\bZ)$$
by $\rho(f)=(d_{ij})$, where $[f(a_i),f(a_j)]=e^{2d_{ij}\pi \sqrt{-1}}I_m$. For $D\in T(n,\bZ/m\bZ)$, let $B_n(U(m))_D=\rho^{-1}(D)$. For $f\in B_n(U(m))_D$, the ordered $n$-tuple $(f(a_1),\ldots,f(a_n))$ is said to be $D$-commuting. Note that $\rho(f)$ is the skew-symmetric matrix associated to the bilinear form $\omega_f$.  

\begin{definition} 
For any $n\times n$ matrix $D\in M_{n\times n}(\bZ/m\bZ)$, the row space $R(D)$ is the sub-module of $(\bZ/m\bZ)^n$ generated by the rows of $D$ over $\bZ$. Let $R_i(D)\subset\bZ/m\bZ$ be the
image of $R(D)$ under the projection onto the $i$-th factor.
Let $r_i(D)=|R_i(D)|$ for $i=1,\dots ,n$ and $\sigma(D)=\sqrt{|R(D)|}$. 
\end{definition}

We recall the structure of the almost commuting $n$-tuples in $U(m)$, established in \cite{ACh}, Corollary 3.6.
\begin{proposition}\label{cor:BnUgenD}
For $D\in T(n, \bZ/m\bZ)$, the space $B_n(U(m))_D$ is non-empty and path connected if $\sigma(D)$ divides $m$, and is empty otherwise. The space $B_n(U(m))$ can be expressed as a disjoint union
of path connected components
$$B_n(U(m))=\coprod\limits_{\substack{D\in T(n,\bZ/m\bZ)\\ \sigma(D)\big{|}m}}B_n((U(m))_D.$$ 
\end{proposition}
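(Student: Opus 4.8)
The plan is to establish this by combining two separate analyses: first the non-emptiness / path-connectedness of each stratum $B_n(U(m))_D$, and then the assembly of $B_n(U(m))$ as a topological disjoint union of these strata. For the second part, the key observation is that the map $\rho:B_n(U(m))\to T(n,\bZ/m\bZ)$ is \emph{continuous} with discrete target: indeed, $[f(a_i),f(a_j)]$ depends continuously on $f$, and since for $f\in B_n(U(m))$ this commutator is always a scalar $m$-th root of unity, the assignment $f\mapsto d_{ij}$ lands in the finite set $\bZ/m\bZ$ and is therefore locally constant. Hence $B_n(U(m)) = \coprod_{D}B_n(U(m))_D$ as topological spaces, with each $B_n(U(m))_D$ open and closed. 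The only remaining point for the decomposition statement is to identify which $D$ occur, i.e. to show $B_n(U(m))_D\neq\emptyset$ iff $\sigma(D)\mid m$, and that each nonempty stratum is path connected.

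For the emptiness criterion, I would argue as follows. Given $D$-commuting $(A_1,\dots,A_n)$, the $A_i$ project to commuting elements $\bar A_i\in PU(m)$, defining a homomorphism $\bar h:\bZ^n\to PU(m)$; the obstruction to lifting $\bar h$ to $SU(m)$ (equivalently to $U(m)$ up to the central torus) is exactly the class $\alpha\in H^2(\bZ^n,\bZ/m\bZ)$ represented by $D$, and the representation-theoretic content is that such a projective representation of $\bZ^n$ of dimension $m$ exists precisely when $\sigma(D)\mid m$. Concretely, one uses the normal form $\alpha=c_1g_1^*g_2^*+\dots+c_rg_{2r-1}^*g_{2r}^*$ from \cite{RV}: a $D$-commuting tuple can be built by taking a tensor product of $r$ ``clock-and-shift'' type representations of sizes $|c_1|,\dots,|c_r|$ (each pair $(g_{2j-1},g_{2j})$ contributing a pair of $|c_j|\times|c_j|$ matrices with the prescribed commutator), together with trivial actions of the remaining generators, and then padding to dimension $m$ by a direct sum of copies and one-dimensional pieces — which is possible exactly when $\prod_j|c_j| = \sigma(D)$ divides $m$. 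Conversely an easy divisibility/character argument shows no such tuple exists when $\sigma(D)\nmid m$. In fact, all of this is precisely the content of \cite{ACh}, Corollary~3.6, so the cleanest route is simply to invoke that result for both the emptiness criterion and the path-connectedness of each stratum, and supply only the (short) continuity argument above for the topological disjointness of the decomposition.

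The main obstacle — if one wants a self-contained treatment rather than citing \cite{ACh} — is the path-connectedness of a single stratum $B_n(U(m))_D$. Unlike the emptiness question, this is not formal: one must show that any two $D$-commuting tuples can be joined by a path through $D$-commuting tuples. The standard approach is a structural decomposition: after conjugating, any $f\in B_n(U(m))_D$ decomposes the ambient $\bC^m$ into isotypic blocks for the projective representation, each block being (an integer multiple of) a fixed irreducible projective representation determined by $D$ tensored with a genuine representation of the ``remaining'' abelian quotient; the moduli of such decompositions is then connected because it fibers over a space of partitions/weights that is itself connected (roughly, a space of configurations on a torus, or a flag-type space), and the normalizer moves used to pass between $f$ and a standard form can be chosen to vary continuously. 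Since \cite{ACh} already carries out exactly this analysis, I would not reproduce it; I would cite Corollary~3.6 of \cite{ACh} for the non-emptiness and path-connectedness of each $B_n(U(m))_D$, and combine it with the continuity of $\rho$ to conclude the displayed disjoint-union decomposition.
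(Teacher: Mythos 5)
Your proposal is correct and matches the paper's approach: the paper gives no proof of this proposition, simply recalling it as \cite{ACh}, Corollary 3.6, which is exactly the citation you identify as carrying the substance (non-emptiness iff $\sigma(D)\mid m$ and path-connectedness of each stratum). The additional local-constancy argument for $\rho$ and the clock-and-shift sketch are sound but go beyond what the paper itself supplies.
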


\noindent We begin our analysis by focusing on the basic case when $m=\sigma (D)$. Let $0\leq t\leq n/2$ and $d_1,d_2,\ldots,d_t\neq 0\in\bZ/m\bZ$. Define $D_n(d_1,d_2,\ldots d_t)=(d_{ij})\in 
T(n, \bZ/m\bZ)$ be the skew-symmetric matrix with
\[
d_{ij}=
\begin{cases}
d_k &\mbox{if } (i,j)=(k+t,k),1\leq k \leq t;\\
-d_k &\mbox{if } (i,j)=(k,k+t),1\leq k \leq t;\\
0 &\mbox{otherwise.}
\end{cases}
\]

\begin{lemma}\label{lemma:scalar}
	Suppose $D\in T(n, \bZ/m\bZ)$, $m=\sigma(D)$. Let $(A_1,\ldots A_n)\in B_n(U(m))_D$. If $B\in U(m)$ commutes with $A_i$ for all $i=1,\ldots,n$, then $B$ is a scalar matrix.
\end{lemma}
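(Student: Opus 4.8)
The plan is to reduce, via the $GL_n(\bZ)$ change of basis, to the case where $D$ is in the normal form quoted in the introduction, and then to analyze the subgroup $H=\langle A_1,\dots,A_n\rangle\subset U(m)$ directly through the clock--and--shift relations.

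First I would note that both the hypothesis and the conclusion depend only on $H$: a matrix $B$ commutes with every $A_i$ exactly when it centralizes $H$, and being a scalar matrix is intrinsic to $B$. Precomposing the defining map $f\colon F_n\to U(m)$ with an automorphism $\varphi$ of $F_n$ keeps it in $B_n(U(m))$, replaces $(A_1,\dots,A_n)$ by a tuple of words in the $A_i$ still generating $H$, and replaces $D=\rho(f)$ by a $GL_n(\bZ)$-congruent matrix $D'$; since the row space of $D'$ is the image of $R(D)$ under an automorphism of $(\bZ/m\bZ)^n$, one has $\sigma(D')=\sqrt{|R(D')|}=\sigma(D)$. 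Because every element of $GL_n(\bZ)$ is induced by some $\varphi$, \cite{RV}, Proposition 4.1 lets me assume $D$ is block diagonal with $2\times2$ blocks $\left(\begin{smallmatrix}0&c_i\\-c_i&0\end{smallmatrix}\right)$, $1\le i\le r$, followed by zero rows and columns, and then $\prod_{i=1}^{r}|c_i|=\sigma(D)=m$.

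Next, for a $D$-commuting tuple with $D$ in this form, $[A_{2i-1},A_{2i}]=e^{2c_i\pi\sqrt{-1}}I=:\zeta_i I$ is a primitive $|c_i|$-th root of unity for $1\le i\le r$, all other pairs among the $A_j$ commute, and $A_j$ is central in $H$ for $j>2r$. I would then simultaneously diagonalize the commuting unitaries $A_{2i-1}^{|c_i|},A_{2i}^{|c_i|}$ $(1\le i\le r)$ and $A_j$ $(j>2r)$, all of which commute with every $A_k$ and with $B$; this writes $\bC^m$ as the orthogonal direct sum of their joint eigenspaces, each invariant under all $A_k$ and under $B$. Fixing a nonzero eigenspace $V$, these operators act as scalars on $V$, so after rescaling each $A_{2i-1},A_{2i}$ by a scalar (which changes neither the commutators nor whether $B$ commutes with them) I may assume $A_{2i-1}^{|c_i|}=A_{2i}^{|c_i|}=I$ on $V$. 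For each $i$ with $c_i\ne0$, set $d=|c_i|$, $X=A_{2i-1}|_V$, $Y=A_{2i}|_V$, so $X^{d}=Y^{d}=I$ and $XY=\zeta_i YX$. Diagonalizing $X$ and using that $Y$ carries its $\lambda$-eigenspace isomorphically onto its $\zeta_i\lambda$-eigenspace, every $d$-th root of unity occurs as an eigenvalue of $X$ with one common multiplicity, so $V$ is $\langle X,Y\rangle$-equivariantly a direct sum of copies of the standard $d$-dimensional irreducible representation (with $X$ the diagonal matrix of $d$-th roots of unity and $Y$ the cyclic shift), and any operator commuting with $X$ and $Y$ acts as the identity on that $d$-dimensional factor. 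Since the subalgebras $\langle A_{2i-1},A_{2i}\rangle$ pairwise commute, a routine application of Schur's lemma across these commuting factors gives $V\cong\bigl(\bigotimes_{i=1}^{r}\bC^{|c_i|}\bigr)\otimes U$, with $\langle A_{2i-1},A_{2i}\rangle$ acting through the $i$-th factor, the remaining central generators $A_j$ $(j>2r)$ acting through $U$, and $B|_V=I\otimes\cdots\otimes I\otimes B_U$. Hence $\dim V=\bigl(\prod_{i}|c_i|\bigr)\dim U=m\dim U\ge m$; since these eigenspaces fill $\bC^m$, there is only one of them, so $\dim U=1$, $V=\bC^m$, and $B=B_U$ is scalar.

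I expect the substantive point to be the bookkeeping in the first reduction — verifying that passing to the $GL_n(\bZ)$-normal form of $D$ leaves the hypothesis, the conclusion, and $\sigma(D)$ intact — together with the need to park the zero rows of $D$ (the blocks with $c_i=0$ and the generators $A_j$ with $j>2r$) among the operators that become scalar on $V$; the clock--and--shift computation and the closing inequality $\dim V\ge\prod_i|c_i|=m$ are then routine.
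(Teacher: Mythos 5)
Your argument is correct, and after the first step it takes a genuinely different route from the paper. You share the paper's opening move: using \cite{RV}, Proposition 4.1 to pass to a $GL_n(\bZ)$-congruent tuple whose matrix is in the block normal form, checking along the way that this preserves both the hypothesis on $B$ and the value $\sigma(D)=m$. From there the paper is shorter and more indirect: it appends $B$ as an $(n+1)$-st coordinate, observes that the resulting $(n+1)$-tuple lies in $B_{n+1}(U(m))_{D''}$ for $D''=D_{n+1}(d_1,\ldots,d_t)$ with $\sigma(D'')=m$ still maximal, and then quotes \cite{ACh}, Theorem 3.3, which in this extremal case yields an orthonormal basis of $\bC^m$ consisting of eigenvectors of the appended coordinate with a single common eigenvalue. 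Your clock-and-shift analysis is in effect a self-contained unpacking of that cited structure theorem: you re-derive, via the joint eigenspace decomposition for the central elements $A_{2i-1}^{|c_i|}$, $A_{2i}^{|c_i|}$, $A_j$ ($j>2r$) and Schur's lemma applied across the pairwise commuting Heisenberg pairs, that each joint eigenspace has dimension at least $\prod_i|c_i|=\sigma(D)=m$, hence equals $\bC^m$ with the commutant of the tuple reduced to scalars. What the paper's route buys is brevity by reusing machinery already established in \cite{ACh}; what yours buys is independence from that reference and an explicit identification of $\bC^m$ as a single copy of the tensor product of standard $|c_i|$-dimensional irreducibles, which makes the dimension count transparent. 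The bookkeeping you flag as the substantive point (invariance of hypothesis, conclusion and $\sigma$ under the $GL_n(\bZ)$ change of generators, and parking the commuting generators among the scalars on $V$) is handled correctly.
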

\begin{proof} By \cite[Proposition 4.1]{RV}, there exists $Q=(q_{ij})\in GL(n,\bZ)$ such that $Q^TDQ=D'=D_n(d_1,\ldots,d_t)$. Define $D''=D_{n+1}(d_1,\ldots,d_t)$ and $A_j'=A_1^{q_{1j}}A_2^{q_{2j}}\ldots A_n^{q_{nj}}$ for $j=1,\ldots, n$. Then $\sigma(D'')=m$ and $(A_1',\ldots A_n', B)\in B_{n+1}(U(m))_{D''}$. By \cite[theorem 3.3]{ACh}, there exists an orthonormal basis of $\bC^{m}$ consisting of eigenvectors of $B$ corresponding to a common eigenvalue. Hence, $B$ is a scalar matrix.
\end{proof}

\begin{lemma}\label{lemma:eigen}
	Suppose $D\in T(n,\bZ/m\bZ)$ and $m=\sigma(D)$. Let $(A_1,\ldots A_n)\in B_n(U(m))_D$ and $\omega=e^{2\pi\sqrt{-1}/r_j(D)}$. If $\lambda$  is an eigenvalue of $A_j$, then $\omega^q\lambda$, where  $q=0,\ldots, r_j(D)-1 $, are all the distinct eigenvalues of $A_j$.
\end{lemma}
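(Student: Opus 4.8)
The plan is to prove the statement in two steps: first, that each $\omega^{q}\lambda$ with $0\le q\le r_j(D)-1$ is an eigenvalue of $A_j$ and that these values are pairwise distinct; second, that $A_j$ has no other eigenvalues. The first step is elementary and uses only the commutation relations; the second is where the hypothesis $m=\sigma(D)$ enters, through Lemma~\ref{lemma:scalar}. For the first step, set $\zeta_{ij}=e^{2 d_{ij}\pi\sqrt{-1}}$, so that $[A_i,A_j]=\zeta_{ij}I_m$ rearranges to $A_jA_i=\zeta_{ij}^{-1}A_iA_j$. If $v$ is a $\lambda$-eigenvector of $A_j$, then $A_iv\neq 0$ and $A_j(A_iv)=\zeta_{ij}^{-1}A_iA_jv=\zeta_{ij}^{-1}\lambda\,(A_iv)$, so $\zeta_{ij}^{-1}\lambda$ is again an eigenvalue of $A_j$; similarly $A_i^{-1}$ produces the eigenvalue $\zeta_{ij}\lambda$. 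Iterating over words in $A_1^{\pm1},\dots,A_n^{\pm1}$ shows that $e^{2\pi\sqrt{-1}c}\lambda$ is an eigenvalue of $A_j$ for every $c$ in the subgroup of $\bZ/m\bZ$ generated by $d_{1j},\dots,d_{nj}$, which by definition is $R_j(D)$. As $R_j(D)$ is a cyclic group of order $r_j(D)$, under the identification $\bZ/m\bZ\subset\bR/\bZ\cong\bS^1$ it maps onto the group $\langle\omega\rangle$ of $r_j(D)$-th roots of unity; hence $\omega^{0}\lambda,\dots,\omega^{r_j(D)-1}\lambda$ are all eigenvalues of $A_j$, and they are pairwise distinct since $\omega$ is a primitive $r_j(D)$-th root of unity.

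For the second step, let $V_\mu$ denote the $\mu$-eigenspace of $A_j$ and put $W=\bigoplus_{q=0}^{r_j(D)-1}V_{\omega^{q}\lambda}\subseteq\bC^{m}$. The computation above shows that conjugation by any $A_i$ carries $V_\mu$ isomorphically onto $V_{\zeta_{ij}^{-1}\mu}$, and since $\zeta_{ij}^{-1}\in\langle\omega\rangle$ this map permutes the direct summands of $W$; hence $A_iW=W$ for every $i$, and therefore $A_iW^{\perp}=W^{\perp}$ as well because the $A_i$ are unitary. Consequently the orthogonal projection $P$ of $\bC^{m}$ onto $W$ commutes with $A_1,\dots,A_n$, so by Lemma~\ref{lemma:scalar} it is a scalar matrix. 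Since $W\supseteq V_\lambda\neq 0$ we conclude $P=I_m$, that is $W=\bC^{m}$, and therefore every eigenvalue of $A_j$ is one of $\omega^{0}\lambda,\dots,\omega^{r_j(D)-1}\lambda$. This completes the proof.

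The only non-routine input is Lemma~\ref{lemma:scalar} itself — equivalently, the irreducibility, as a projective representation of $\bZ^{n}$, of the tuple $(A_1,\dots,A_n)$ when $m=\sigma(D)$ — which ultimately rests on \cite[Theorem 3.3]{ACh}; this is the step I expect to be doing the real work, while everything above is the formal extraction of the eigenvalue pattern from it. The one technical point worth watching is the sign/inverse convention in the commutator identity, but it does not affect the argument: regardless of the convention, the group of roots of unity generated by conjugation is exactly $\langle\omega\rangle$.
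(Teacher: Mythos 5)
Your argument is correct in substance but takes a different route from the paper in its second half, and it has one small technical slip worth repairing. The first step (producing the eigenvalues $\omega^q\lambda$) is the same ladder-operator idea as in the paper, except that the paper is more economical: it picks integers $a_1,\dots,a_n$ with $a_1d_{1j}+\dots+a_nd_{nj}$ equal to a generator of $R_j(D)$ and uses the single word $B=A_1^{a_1}\cdots A_n^{a_n}$, which satisfies $BA_j=\omega^{-1}A_jB$, rather than quoting the whole group generated by all words. For the second step the paper argues quite differently: it observes that $A_j^{r_j(D)}$ commutes with all of $A_1,\dots,A_n$, applies Lemma \ref{lemma:scalar} to conclude $A_j^{r_j(D)}=\lambda^{r_j(D)}I_m$, and hence every eigenvalue $\mu$ satisfies $\mu^{r_j(D)}=\lambda^{r_j(D)}$, forcing $\mu\in\{\omega^q\lambda\}$. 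Your alternative — showing $W=\bigoplus_q V_{\omega^q\lambda}$ is invariant under every $A_i$ and then forcing $W=\bC^m$ by irreducibility — is a valid Schur-type argument and arguably illuminates \emph{why} the statement holds, but it costs you the eigenspace bookkeeping that the paper avoids. The slip: Lemma \ref{lemma:scalar} as stated applies only to $B\in U(m)$, and the orthogonal projection $P$ onto $W$ is not unitary unless $W=\bC^m$, which is what you are trying to prove. The fix is immediate — apply the lemma to the unitary reflection $I_m-2P$ (or to any unitary acting as $\zeta I$ on $W$ and as $I$ on $W^\perp$ with $\zeta\neq1$), which commutes with the $A_i$ exactly when $P$ does and whose scalarity forces $W^\perp=0$ — but as written the appeal to the lemma is not licensed by its hypotheses.
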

\begin{proof}
	Let $D=(d_{ij})$. There exists $a_1,\ldots,a_n\in\bZ$ such that $a_1d_{1j}+\ldots+a_nd_{nj}=\left[-\frac{1}{r_j(D)}\right]$. Let $B=A_1^{a_1}\ldots A_n^{a_n}$. Then $BA_j=\omega^{-1} A_jB$. If $v$ is an eigenvector of $A_j$ corresponding to eigenvalue $\lambda$, then $A_jBv=\omega BA_jv=\omega B(\lambda v)=\omega \lambda Bv$ and so $\omega \lambda$ is also an eigenvalue of $A_j$. Inductively $\omega^q \lambda$ is also an eigenvalue of $A_j$ for any $q$. On the other hand, $A_j^{r_j(D)}$ commutes with $A_1,\ldots,A_n$. By lemma \ref{lemma:scalar}, $A_j^{r_j(D)}$ is a scalar matrix. Since $\lambda$ is an eigenvalue of $A_j$, $A_j^{r_j(D)}=\lambda^{r_j(D)}I_{m}$ and so any eigenvalue of $A_j$ is of the form $\omega^q \lambda$.
\end{proof}

In the situation of lemma \ref{lemma:eigen}, take an eigenvalue $\lambda_j$ of $A_j$. Define $$c_j=\frac{1}{2\pi\sqrt{-1}}\log \lambda_j\in\bR/\langle \tfrac{1}{r_j(D)}\rangle\cong\bS^1/R_j(D).$$ 
This element $c_j$ is independent of the choice of $\lambda_j$.

\begin{theorem}\label{thm:eigen}
	Let $m=\sigma(D)$. There exists a $\bT^n$-equivariant homeomorphism
	$$B_n(U(m))_D/U(m) \cong \bT^n/R(D)$$
	such that its composition with the quotient map
	$$B_n(U(m))_D\to B_n(U(m))_D/U(m) \cong \bT^n/R(D)\to \prod_{j=1}^{n} \bS^1/R_j(D)$$
	sends $(A_1,\ldots A_n)$ to $(c_1,\ldots c_n).$
\end{theorem}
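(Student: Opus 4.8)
The plan is first to establish a well-defined map
$\Phi : B_n(U(m))_D/U(m) \to \bT^n/R(D)$, where I identify $\bT^n$ with $(\bS^1)^n$ and $R(D)$ acts coordinatewise. On the level of $n$-tuples I would send $(A_1,\ldots,A_n)$ to the class of $(c_1,\ldots,c_n)$, where $c_j$ is the element produced just before the theorem statement. Lemma~\ref{lemma:eigen} and the paragraph following it already show each $c_j$ is a well-defined element of $\bS^1/R_j(D)$ — independent of which eigenvalue $\lambda_j$ of $A_j$ we pick — so the assignment lands in $\prod_j \bS^1/R_j(D)$. One then checks that conjugating $(A_1,\ldots,A_n)$ by an element of $U(m)$ does not change the set of eigenvalues of each $A_j$, so $\Phi$ descends to the quotient by $U(m)$; and I would verify the image actually lies in the subspace $\bT^n/R(D) \subset \prod_j \bS^1/R_j(D)$ — this containment should follow from the specific relations imposed by $D$ on the eigenvalues, i.e. the row space $R(D)$ is exactly the ambiguity in lifting $(c_1,\ldots,c_n)$ from $\prod \bS^1/R_j(D)$.

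**Equivariance and continuity.** Next I would pin down the $\bT^n$-action on the source: $\bT^n$ acts on $B_n(U(m))_D$ by scaling, $(t_1,\ldots,t_n)\cdot(A_1,\ldots,A_n) = (t_1 A_1,\ldots,t_n A_n)$, which preserves the commutator relations hence preserves $B_n(U(m))_D$ and commutes with conjugation. Under $\Phi$ this clearly corresponds to translation on $\bT^n/R(D)$, giving the asserted equivariance, and the commuting-square statement about the composition is then essentially the definition of $\Phi$. Continuity of $\Phi$ is the one genuinely fussy point: eigenvalues vary continuously (as unordered multisets), and the construction of $c_j$ only uses the eigenvalue-set of $A_j$, so $\Phi$ is continuous; I would phrase this via the continuity of the elementary symmetric functions / characteristic polynomial coefficients, or simply invoke continuity of the map $U(m)\to SP^m(\bS^1)$ taking a matrix to its spectrum.

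**Bijectivity.** For injectivity: if two tuples have the same image, their corresponding $A_j$ have the same eigenvalues for every $j$; combined with Proposition~\ref{cor:BnUgenD} (path-connectedness of $B_n(U(m))_D$, so it is a single orbit-type stratum) and the rigidity coming from Lemma~\ref{lemma:scalar} (the only matrices commuting with all $A_i$ are scalars — so the stabilizer of a point is exactly the center $Z(U(m))$ and the orbit is as large as possible), one deduces the two tuples are conjugate. Concretely I expect the argument to run: using \cite[Proposition~4.1]{RV} reduce $D$ to standard form $D_n(d_1,\ldots,d_t)$, where $B_n(U(m))_D$ is built explicitly from a single irreducible projective representation of the relevant finite Heisenberg-type group twisted by characters of the remaining free coordinates, and Schur-type uniqueness of that irreducible (which is what Lemma~\ref{lemma:scalar} encodes) forces conjugacy once the character data agree. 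For surjectivity: run the same explicit model backwards — given a target point, choose eigenvalues realizing the $c_j$ and build an explicit tuple in $B_n(U(m))_D$ mapping to it. Since $m = \sigma(D)$, the dimension count works out so that the standard Heisenberg representation has exactly dimension $m$, which is why no room is left over.

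**Main obstacle.** The hard part will be verifying injectivity rigorously, i.e. upgrading "same eigenvalues coordinatewise" to "conjugate as $n$-tuples": this is where one must use that $m=\sigma(D)$ makes the associated projective representation irreducible and hence rigid, and carefully track how the change-of-basis $Q\in GL(n,\bZ)$ from \cite[Proposition~4.1]{RV} interacts with the eigenvalue data and with the row space $R(D)$. Establishing that $\Phi$ is a homeomorphism (not just a continuous bijection) then follows since both spaces are compact Hausdorff, so no separate argument for the inverse being continuous is needed — I would close with that remark.
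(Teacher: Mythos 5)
There is a genuine gap at the very first step of your construction, and it is fatal to the whole ``forward map'' strategy. You propose to send $(A_1,\ldots,A_n)$ to $(c_1,\ldots,c_n)$ and assert that $\bT^n/R(D)$ is a subspace of $\prod_j\bS^1/R_j(D)$. The relationship goes the other way: since $R(D)\subset\prod_jR_j(D)$, the natural map $\bT^n/R(D)\to\prod_j\bS^1/R_j(D)$ is a \emph{surjection} with fibers of cardinality $\bigl|\prod_jR_j(D)\bigr|/|R(D)|$, which is strictly greater than $1$ in general --- this is precisely why the theorem only asserts that the \emph{composition} with this quotient map is given by the eigenvalue data. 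Hence the coordinatewise classes $(c_1,\ldots,c_n)$ do not determine a point of $\bT^n/R(D)$, and the map you define is not injective. Concretely, take $n=3$, $m=2$, and $D\in T(3,\bZ/2\bZ)$ with all off-diagonal entries equal to $1$; then $\sigma(D)=2=m$, $|R(D)|=4$, but $\bigl|\prod_jR_j(D)\bigr|=8$. The Pauli triples $(P_1,P_2,P_3)$ and $(P_1,P_2,-P_3)$ both lie in $B_3(U(2))_D$ and each coordinate has spectrum $\{\pm1\}$, so all the $c_j$ agree; yet they are not conjugate, because any $U$ fixing $P_1$ and $P_2$ under conjugation is scalar by Lemma~\ref{lemma:scalar} and therefore cannot carry $P_3$ to $-P_3$. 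So the injectivity you flag as ``the main obstacle'' genuinely fails for the map as defined: no Schur-rigidity argument can repair a map whose target is a proper quotient of the correct one.

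The paper avoids this by building the map in the opposite direction. Fix $(C_1,\ldots,C_n)\in B_n(U(m))_D$ with $1$ an eigenvalue of each $C_i$ and define $f(\theta_1,\ldots,\theta_n)=(\theta_1C_1,\ldots,\theta_nC_n)$. Conjugating the tuple by $C_i$ multiplies $C_j$ by $e^{2\pi\sqrt{-1}d_{ij}}$, i.e.\ translates $\theta$ by the $i$-th row of $D$, so $f$ descends to $\bar f:\bT^n/R(D)\to B_n(U(m))_D/U(m)$; this is shown to be a homeomorphism by transporting everything through $Q\in GL(n,\bZ)$ to the standard form $D_n(d_1,\ldots,d_t)$ and quoting the explicit model of \cite[Theorem~3.4]{ACh}. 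Your reduction to standard form and your appeal to compactness are fine as far as they go, but to salvage a forward construction you would have to record strictly more than the spectra of the individual $A_j$ (for instance, eigenvalue data of suitable monomials $A_1^{a_1}\cdots A_n^{a_n}$), which in effect amounts to reconstructing the inverse of the paper's scaling map.
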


\begin{proof}
If $D$ is of the special form $D=D_n(d_1,\ldots,d_t)$, the theorem follows easily from the proof of theorem 3.4 in \cite{ACh}. In the general case, let $(C_1,\ldots C_n)\in B_n(U(m))_D$. By multiplying by scalars if necessary, we assume that 1 is an eigenvalue of each $C_i$. The map $f:\bT^n\to B_n(U(m))_D$ with $f(\theta_1,\ldots,\theta_n)=(\theta_1C_1,\ldots,\theta_nC_n)$ is $\bT^n$-equivariant. Also, there exists $Q=(q_{ij})\in GL(n,\bZ)$ such that $Q^TDQ=D'=D_n(d_1,\ldots,d_t)$. Define $\phi:\bT^n\to\bT^n$ by $\phi(\theta_1,\ldots,\theta_n)=(\theta_1',\ldots,\theta_n')$ and $g:B_n(U(m))_D\to B_n(U(m))_{D'}$ by $g(A_1,\ldots,A_n)=(A'_1,\ldots,A'_n)$, where
$\theta_j' = \theta_1^{q_{1j}}\cdot \dots \cdot \theta_n^{q_{nj}}$
and $A_j'=A_1^{q_{1j}}A_2^{q_{2j}}\ldots A_n^{q_{nj}}$ for $j=1,\ldots, n$. Note that $g$ is a $\phi$-equivariant and $U(m)$-equivariant homeomorphism. Using the result for the special case $D'$, it can be deduced that the composition 
 \[ \bT^n/R(D') \overset{\bar{\phi}^{-1}}{\cong}   \bT^n/R(D)  \overset{\bar{f}}{\to}  B_n(U(m))_D/U(m) \overset{\bar{g}}{\cong} B_n(U(m))_{D'}/U(m)  \]
is a homeomorphism and so is  $\bar{f}$. It is clear that $\bar{f}^{-1}$ has the desired properties.
\end{proof}
Note that $R(D)$ is a finite subgroup of $\bT^n$, acting by translation; thus 
$\bT^n \to \bT^n/R(D)$ is a covering space and $\bT^n/R(D)$ is homeomorphic to $\bT^n$.

\medskip

For the general case $m=l\cdot\sigma (D)$, we recall that from \cite{ACh}, Corollary 3.10 that
$$B_n(U(m))_D/U(m)\cong (B_n(\sigma(D))_D/U(\sigma (D)))^l/\Sigma_l = SP^l(B_n(\sigma(D))_D/U(\sigma (D))).$$
Hence, we obtain that
\begin{theorem}\label{thm:BUnmod}
The moduli space of ordered almost commuting $n$--tuples in $U(m)$ can be expressed as a disjoint union of
symmetric products of the $n$--torus $\bT^n$:
$$B_n(U(m))/U(m) \cong \coprod\limits_{\substack{D\in T(n,\bZ/m\bZ)\\ \sigma(D)\big{|}m}}
SP^{m\over\sigma(D)}(\bT^n/R(D))$$
\end{theorem}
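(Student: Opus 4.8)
The plan is to assemble the three ingredients already in place: the component decomposition of Proposition~\ref{cor:BnUgenD}, the computation of Theorem~\ref{thm:eigen} in the case $\sigma(D)=m$, and the symmetric--product reduction quoted from \cite{ACh}, Corollary 3.10.

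First I would observe that conjugation by $U(m)$ preserves each stratum $B_n(U(m))_D$, since conjugating an $n$--tuple does not change its commutators $[A_i,A_j]$; hence the $U(m)$--action on $B_n(U(m))$ restricts to each $B_n(U(m))_D$ and the quotient splits as $B_n(U(m))/U(m)=\coprod_D B_n(U(m))_D/U(m)$, the union running over those $D\in T(n,\bZ/m\bZ)$ with $\sigma(D)\mid m$ by Proposition~\ref{cor:BnUgenD}. Because these strata are precisely the path components of $B_n(U(m))$, they are open and closed, so this is genuinely a topological disjoint union and it suffices to identify each summand.

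Next, fix such a $D$ and set $l=m/\sigma(D)$. By \cite{ACh}, Corollary 3.10, there is a homeomorphism $B_n(U(m))_D/U(m)\cong SP^{l}\bigl(B_n(\sigma(D))_D/U(\sigma(D))\bigr)$. Applying Theorem~\ref{thm:eigen} to the group $U(\sigma(D))$ and the matrix $D$ — whose $\sigma$--invariant equals the rank $\sigma(D)$, so the hypothesis of that theorem is satisfied — gives a homeomorphism $B_n(\sigma(D))_D/U(\sigma(D))\cong \bT^n/R(D)$. Since $SP^{l}(-)$ is a functor on topological spaces that carries homeomorphisms to homeomorphisms, composing yields $B_n(U(m))_D/U(m)\cong SP^{l}(\bT^n/R(D))=SP^{m/\sigma(D)}(\bT^n/R(D))$. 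Taking the disjoint union over all admissible $D$ then gives the claimed formula.

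I do not expect any genuinely hard step here: the theorem is a bookkeeping combination of results already established. The only point requiring a moment's care is the compatibility of the two cited homeomorphisms — namely that the source of the symmetric--product description in \cite{ACh}, Corollary 3.10 is literally the space $B_n(\sigma(D))_D/U(\sigma(D))$ to which Theorem~\ref{thm:eigen} applies — but this is immediate from the statements, and one may even keep track of the $\bT^n$--equivariance throughout. Finally, one should note, as in the remark following Theorem~\ref{thm:eigen}, that $\bT^n/R(D)$ is homeomorphic to $\bT^n$ via the covering $\bT^n\to\bT^n/R(D)$, so each summand is in fact a symmetric product of the $n$--torus.
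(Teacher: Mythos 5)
Your proposal is correct and follows essentially the same route as the paper: the paper also combines the component decomposition of Proposition~\ref{cor:BnUgenD}, the identification $B_n(U(\sigma(D)))_D/U(\sigma(D))\cong\bT^n/R(D)$ from Theorem~\ref{thm:eigen}, and the symmetric--product reduction of \cite{ACh}, Corollary 3.10. The only cosmetic remark is that calling $\sigma(D)$ ``the rank'' is loose wording; what matters is simply that the hypothesis $m=\sigma(D)$ of Theorem~\ref{thm:eigen} holds when the ambient group is $U(\sigma(D))$, which is what you use.
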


Now we apply the following result.
\begin{lemma}[\cite{ACGII}, Lemma 2.3]
The projection map $U(m)\to PU(m)$ induces a $U(m)$--equivariant, $\bT^n$--principal bundle
$$B_n(U(m)) \to Hom(\bZ^n, PU(m))$$ 
which gives rise to a homeomorphism
$$ \bT^n\backslash B_n(U(m))/U(m)\cong Rep (\bZ^n, PU(m)).$$
In particular it induces a bijection between $\pi_0(B_n(U(m))/U(m))$ and $\pi_0(Rep(\bZ^n, PU(m)))$.
\end{lemma}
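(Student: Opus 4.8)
The plan is to realize all three spaces as successive quotients by commuting actions of compact groups. First I would recall that the center of $U(m)$ is the circle $\bS^1=\{zI_m:|z|=1\}$ and that $PU(m)=U(m)/\bS^1$; since $\bS^1$ is a closed subgroup of the Lie group $U(m)$, the projection $\pi\colon U(m)\to PU(m)$ is a principal $\bS^1$-bundle, and hence $\pi^n\colon U(m)^n\to PU(m)^n$ is a principal $\bT^n$-bundle, where $\bT^n=(\bS^1)^n$ acts by scalar multiplication coordinatewise. The key observation is that a tuple $(\bar A_1,\dots,\bar A_n)\in PU(m)^n$ lies in $\Hom(\bZ^n,PU(m))$ precisely when $[\bar A_i,\bar A_j]=1$ for all $i,j$, which for arbitrary lifts $A_i\in U(m)$ is equivalent to $[A_i,A_j]\in\ker\pi=Z(U(m))$ for all $i,j$ --- exactly the defining condition for $(A_1,\dots,A_n)\in B_n(U(m))$. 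Thus $B_n(U(m))=(\pi^n)^{-1}\bigl(\Hom(\bZ^n,PU(m))\bigr)$ is a $\bT^n$-saturated subspace of $U(m)^n$, and restricting $\pi^n$ to it produces a principal $\bT^n$-bundle $B_n(U(m))\to\Hom(\bZ^n,PU(m))$ identifying $\Hom(\bZ^n,PU(m))$ with $B_n(U(m))/\bT^n$.

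Next I would verify the equivariance. Conjugation by any $g\in U(m)$ is an automorphism of $U(m)$ fixing the center pointwise, so it preserves the condition $[A_i,A_j]\in Z(U(m))$ and hence the subspace $B_n(U(m))\subset U(m)^n$; moreover it commutes with the $\bT^n$-action since $g(z_1A_1,\dots,z_nA_n)g^{-1}=(z_1\,gA_1g^{-1},\dots,z_n\,gA_ng^{-1})$. Hence $B_n(U(m))$ carries commuting actions of $\bT^n$ and $U(m)$, and the bundle projection above is $U(m)$-equivariant for the conjugation action of $U(m)$ on $\Hom(\bZ^n,PU(m))$ through the surjection $U(m)\to PU(m)$. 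Since the kernel $\bS^1$ of that surjection is central it acts trivially by conjugation, so the $U(m)$- and $PU(m)$-orbits on $\Hom(\bZ^n,PU(m))$ agree and
$$\Rep(\bZ^n,PU(m))=\Hom(\bZ^n,PU(m))/PU(m)=\bigl(B_n(U(m))/\bT^n\bigr)/U(m).$$
As the $\bT^n$- and $U(m)$-actions commute and both groups are compact --- so all the relevant orbit spaces are compact Hausdorff and the iterated quotient coincides with the quotient by $\bT^n\times U(m)$ --- the right-hand side is canonically homeomorphic to $\bT^n\backslash B_n(U(m))/U(m)$, giving the asserted homeomorphism.

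For the statement on $\pi_0$, I would observe that the natural map $B_n(U(m))/U(m)\to\bT^n\backslash B_n(U(m))/U(m)$ is the quotient by the residual action of the connected group $\bT^n$. Its fibers are $\bT^n$-orbits, hence connected, and since $\bT^n$ is a compact Lie group acting on the metrizable space $B_n(U(m))/U(m)$ the orbit map has the path-lifting property; lifting a path joining two components downstairs and using connectedness of orbits shows that the induced map on $\pi_0$ is both surjective and injective. Composing with the homeomorphism above yields the bijection $\pi_0(B_n(U(m))/U(m))\cong\pi_0(\Rep(\bZ^n,PU(m)))$.

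Essentially the only content lies in the first step --- identifying $B_n(U(m))$ with the total space $(\pi^n)^{-1}(\Hom(\bZ^n,PU(m)))$ of the restricted principal bundle; the remaining manipulations of iterated quotients by commuting compact group actions are formal. I expect the one point requiring some care to be the verification that $(B_n(U(m))/\bT^n)/U(m)$ genuinely recovers the double-coset space $\bT^n\backslash B_n(U(m))/U(m)$, i.e. that the two quotients may be interchanged and amalgamated; this is where compactness (hence properness) of the group actions is used.
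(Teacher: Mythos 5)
Your proof is correct, and it is essentially the standard argument: the paper itself does not prove this lemma but cites it from \cite{ACGII} (Lemma 2.3), and your identification of $B_n(U(m))$ as the $\bT^n$-saturated preimage of $\Hom(\bZ^n,PU(m))$ under $U(m)^n\to PU(m)^n$, followed by the formal interchange of the commuting $\bT^n$- and $U(m)$-quotients, is exactly the intended route. The only minor remark is that the interchange of quotients needs no compactness (it follows from the universal property of quotient topologies for commuting actions); compactness/path-lifting is only genuinely used, as you note, for the $\pi_0$ statement.
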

The components $B_n(U(m))_D$ each give rise to a principal $\bT^n$--bundle, and after dividing
out by conjugation we see that the action of $\bT^n$ is given by the simultaneous
action on each unordered coordinate in the symmetric product through the homomorphism
$\bT^n\to \bT^n/R(D)$; we denote these
reduced symmetric products by
$\widehat{SP^{m\over\sigma(D)}}(\bT^n/R(D))$. Note that they are all in fact homeomorphic to the
corresponding reduced symmetric product of $\bT^n$. Combining the two results above we obtain 
\begin{theorem*}
	$$Rep(\bZ^n, PU(m)) \cong \coprod\limits_{\substack{D\in T(n,\bZ/m\bZ)\\ \sigma(D)\big{|}m}}
	\widehat{SP^{m\over\sigma(D)}}(\bT^n/R(D))$$
\end{theorem*}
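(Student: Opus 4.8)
The plan is to combine the three ingredients that have already been assembled in this section: Theorem \ref{thm:BUnmod}, which identifies $B_n(U(m))/U(m)$ with a disjoint union of symmetric products $SP^{m/\sigma(D)}(\bT^n/R(D))$ indexed by the admissible skew matrices $D$; the quoted Lemma from \cite{ACGII}, which realizes $Rep(\bZ^n,PU(m))$ as the further quotient $\bT^n\backslash B_n(U(m))/U(m)$ via a principal $\bT^n$--bundle $B_n(U(m))\to Hom(\bZ^n,PU(m))$; and the explicit description, recorded in Theorem \ref{thm:eigen}, of how the residual $\bT^n$--action looks on the component indexed by $D$. Concretely, I would first observe that since the $\bT^n$--action on $B_n(U(m))$ preserves the function $\rho$ (multiplying generators by central scalars does not change the commutators $[A_i,A_j]$), it preserves each component $B_n(U(m))_D$, so the quotient $\bT^n\backslash B_n(U(m))/U(m)$ splits as the disjoint union over admissible $D$ of $\bT^n\backslash\big(B_n(U(m))_D/U(m)\big)$.

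Next I would pin down the residual $\bT^n$--action on a single piece $B_n(U(m))_D/U(m)$. For the basic case $m=\sigma(D)$, Theorem \ref{thm:eigen} gives a $\bT^n$--equivariant homeomorphism $B_n(U(m))_D/U(m)\cong \bT^n/R(D)$ under which, in the chosen coordinates, multiplication of $(A_1,\dots,A_n)$ by $(\theta_1,\dots,\theta_n)\in\bT^n$ sends the eigenvalue data $(c_1,\dots,c_n)$ to $(\theta_1c_1,\dots,\theta_nc_n)$ — that is, $\bT^n$ acts on $\bT^n/R(D)$ through the quotient homomorphism $\bT^n\to\bT^n/R(D)$ by translation. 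For the general case $m=l\cdot\sigma(D)$, \cite{ACh}, Corollary 3.10 identifies $B_n(U(m))_D/U(m)$ with $SP^l\big(B_n(\sigma(D))_D/U(\sigma(D))\big)$, and the $\bT^n$--action is the diagonal one on the $l$ unordered coordinates, each coordinate carrying the action just described; hence $\bT^n$ acts on $SP^l(\bT^n/R(D))$ through $\bT^n\to\bT^n/R(D)$, translating all $l$ slots simultaneously. Passing to the quotient by this action is, by the very definition given in the excerpt, the reduced symmetric product $\widehat{SP^{m/\sigma(D)}}(\bT^n/R(D))$.

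Assembling these, I get $Rep(\bZ^n,PU(m))\cong \bT^n\backslash B_n(U(m))/U(m)\cong \coprod_{D}\bT^n\backslash\big(SP^{m/\sigma(D)}(\bT^n/R(D))\big)=\coprod_{D}\widehat{SP^{m/\sigma(D)}}(\bT^n/R(D))$, with the disjoint union ranging over $D\in T(n,\bZ/m\bZ)$ with $\sigma(D)\mid m$, which is exactly the claimed homeomorphism. It remains only to note, as the remark after Theorem \ref{thm:eigen} already does, that $\bT^n\to\bT^n/R(D)$ is a finite covering, so $\bT^n/R(D)$ is itself a torus and each reduced symmetric product in the coproduct is homeomorphic to the corresponding reduced symmetric product of $\bT^n$.

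The one step that genuinely requires care — and which I expect to be the main obstacle — is verifying that the $\bT^n$--action appearing after the final quotient really is the simultaneous translation action on the symmetric product coordinates through $\bT^n\to\bT^n/R(D)$, rather than some twisted or permuted variant. This is not automatic from Theorems \ref{thm:eigen} and \ref{thm:BUnmod} in isolation; it needs the compatibility of the $\bT^n$--action with the identification in Corollary 3.10 of \cite{ACh}, i.e. that under the block-diagonal/orbit decomposition producing the $SP^l$ structure, multiplying the whole $n$--tuple by a central scalar corresponds to translating every symmetric-product slot by the same element. Once that compatibility is in hand the rest is formal bookkeeping, so I would devote the bulk of the argument to making that identification precise (tracking the eigenvalue coordinates $c_j$ through the isomorphism) and treat the splitting over $D$ and the final collapse to reduced symmetric products as routine.
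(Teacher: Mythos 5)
Your proposal is correct and follows essentially the same route as the paper: it combines Theorem \ref{thm:BUnmod} with the quoted Lemma from \cite{ACGII} and identifies the residual $\bT^n$--action on each piece $SP^{m/\sigma(D)}(\bT^n/R(D))$ as simultaneous translation through $\bT^n\to\bT^n/R(D)$, yielding the reduced symmetric products. The compatibility point you single out (that the $\bT^n$--action passes through the identification of \cite{ACh}, Corollary 3.10 as the diagonal translation on the unordered slots) is exactly the step the paper asserts in the sentence preceding the theorem, so your emphasis on making it precise is a reasonable refinement rather than a deviation.
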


The labelling of components can be understood using cohomology. As before we can identify 
$T(n,\bZ/m\bZ)$ with
$H^2(\bZ^n,\bZ/m\bZ)$ using a basis. For a projective representation $h:\bZ^n\to PU(m)$, 
there is a cohomology
class in $H^2(\bZ^n, \bZ/m\bZ)$ associated to it, defined as the pullback $\alpha= h^*(\nu)$, where
$\nu \in H^2(BPU(m), \bZ/m\bZ) \cong \bZ/m\bZ$ is the canonical generator associated to $SU(m)$. 
The component corresponding to $h$ is precisely the one labelled by a skew symmetric matrix $D$
which represents $\alpha$.
As this component is non-empty we must have
$\sigma(\alpha) \big{|} m$. 

\medskip

\begin{example}
In the case when $n=2$, $H^2(\bZ^2,\bZ/m\bZ)\cong \bZ/m\bZ$ and $\sigma (D) = |D|$, the order of $D$,
which always divides $m$ and so $N(2,m)=m$. Our decomposition can be written as
$$Rep(\bZ^2, PU(m))
 \cong \widehat{SP^m}(\bT^2)~\bigsqcup \coprod\limits_{\substack{D\in \bZ/m\bZ\\1< |D| <m}}\widehat{SP^{m\over{|D|}}}(\bT^2/R(D))~\bigsqcup~\{x_1, \dots , x_{\Phi (m)}\}$$
where $\Phi$ is Euler's function (see \cite{ACG2}, Proposition 9).
\end{example}
\begin{example}
We now consider the case when $m=p^2$ and $n\ge 4$. From the analysis in \cite{ACh}, section 3, we
see that $Rep(\bZ^n, PU(p^2))$ has 
$$	N(n,p^2)=1+\frac{(p^{n-1}-1)(p^n-1)(p^{2n+1}-p^n-p^{n-1}+p^4+p^2-1)}{(p^2-1)(p^4-1)}$$
components, of which $$u(p)=\frac{(p^{n-1}-1)(p^n-1)}{p^2-1}$$ correspond to $\sigma (D)=p$ and so we have
$$Rep(\bZ^n, PU(p^2)) \cong \widehat{SP^{p^2}}(\bT^n)~\bigsqcup
\coprod\limits_{\substack{\frac{(p^{n-1}-1)(p^n-1)}{p^2-1}}}
	\widehat{SP^{p}}(\bT^n/R(D))~\bigsqcup~ \{x_1, \dots , x_{N(n,p^2)-u(p)-1}\}$$
\end{example}



\medskip

Our analysis can be applied to describe the projective representations of any 
finitely generated abelian group. Let 
$$\Gamma=\bZ/k_1\oplus\ldots\oplus \bZ/k_s\oplus \bZ^n\cong \Tor(\Gamma)\oplus \bZ^n.$$
Define
$$B(\Gamma,U(m))= \{(A_1,\ldots, A_{s+n})\in B_{s+n}(U(m)):A_i^{k_i}=I_m \text{ for } 1\leq i\leq s\}.$$ 
It is a subspace of $B_{s+n}(U(m))$ and is invariant under the action of the subgroup
$$\{(\theta_1,\ldots,\theta_{s+n}):\theta_i^{k_i}=1 \text{ for } 1\leq i\leq s\}\cong\Tor(\Gamma)\times \bT^n\subset \bT^{s+n}.$$
This group action commutes with the conjugation action of $U(m)$. Hence, $B(\Gamma,U(m))$ is a $U(m)$--equivariant, $(\Tor(\Gamma)\times \bT^n)$--principal bundle over $Hom(\Gamma,PU(m))$. Also, there is a decomposition of
$$B(\Gamma,U(m))=\coprod B(\Gamma,U(m))_D$$ 
into a disjoint union of subspaces indexed by $D\in T(s+n,\bZ/m\bZ)$. The subspaces 
$$Hom(\Gamma, PU(m))_D\subset Hom(\Gamma, PU(m))\text{ and }Rep(\Gamma, PU(m))_D\subset Rep(\Gamma, PU(m))$$ 
can be similarly defined.

\medskip

Note that for $A_i\in U(m)$, $A_i^{k_i}=I_m$ if and only if all the eigenvalues of $A_i$ are $k_i$-th roots of unity. By Theorem \ref{thm:eigen}, we obtain the following results similar to Theorem \ref{thm:BUnmod} and Theorem A.

\begin{theorem}\label{thm:repgammaum}
	Let $\Gamma=\bZ/k_1\oplus\ldots\oplus \bZ/k_s\oplus \bZ^n\cong \Tor(\Gamma)\oplus \bZ^n.$ Then
	\[B(\Gamma, U(m))/U(m) \cong \coprod\limits_{\substack{D\in T(s+n,\bZ/m\bZ)\\ \sigma(D)\big{|}m~{\rm and}~r_i(D)\big{|}k_i}}
	SP^{m\over\sigma(D)}((\Tor(\Gamma)\times \bT^n)/R(D))\]
and	
		\[Rep(\Gamma, PU(m)) \cong \coprod\limits_{\substack{D\in T(s+n,\bZ/m\bZ)\\ \sigma(D)\big{|}m~ {\rm and} ~ r_i(D)\big{|}k_i}}
		\widehat{SP^{m\over\sigma(D)}}((\Tor(\Gamma)\times \bT^n)/R(D))\]
\end{theorem}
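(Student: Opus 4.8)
The plan is to follow the pattern of the proof of Theorem~A: analyze $B(\Gamma,U(m))$ one component $B(\Gamma,U(m))_D$ at a time, identify each component using the eigenvalue description of Theorem~\ref{thm:eigen}, and then divide out by the torus to pass to $Rep(\Gamma,PU(m))$. Fix $D\in T(s+n,\bZ/m\bZ)$. Since the $(\Tor(\Gamma)\times\bT^n)$-action multiplies each $A_i$ by a central scalar, it commutes with $\rho$ and hence preserves each $B(\Gamma,U(m))_D$; together with the principal bundle $B(\Gamma,U(m))\to Hom(\Gamma,PU(m))$ described above, this gives $Rep(\Gamma,PU(m))_D\cong(\Tor(\Gamma)\times\bT^n)\backslash B(\Gamma,U(m))_D/U(m)$, so it suffices to describe $B(\Gamma,U(m))_D/U(m)$ together with its residual torus action.

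First I would treat the basic case $m=\sigma(D)$. Here $B(\Gamma,U(m))_D$ consists of the tuples in $B_{s+n}(U(m))_D$ with $A_i^{k_i}=I_m$ for $1\le i\le s$, which by the remark before the theorem means every eigenvalue of $A_i$ is a $k_i$-th root of unity. By Lemma~\ref{lemma:eigen} the eigenvalues of $A_i$ form a single coset $R_i(D)\cdot\lambda_i$ of the cyclic group $R_i(D)$ of order $r_i(D)$; this coset consists of $k_i$-th roots of unity exactly when $r_i(D)\mid k_i$ and $\lambda_i^{k_i}=1$, i.e. (in the additive coordinates of Theorem~\ref{thm:eigen}) $c_i$ lies in the image of $\tfrac1{k_i}\bZ/\bZ$ in $\bS^1/R_i(D)$. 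As this condition involves the $i$-th factor only, it factors through the projection $\bT^{s+n}/R(D)\to\prod_j\bS^1/R_j(D)$ of Theorem~\ref{thm:eigen}; pulling it back identifies $B(\Gamma,U(m))_D/U(m)$ with the image in $\bT^{s+n}/R(D)$ of $\{(\theta_1,\dots,\theta_{s+n})\in\bT^{s+n}:\theta_i^{k_i}=1,\ 1\le i\le s\}=\Tor(\Gamma)\times\bT^n$. Hence $B(\Gamma,U(m))_D$ is non-empty precisely when $\sigma(D)\mid m$ and $r_i(D)\mid k_i$ for $1\le i\le s$ — the latter being exactly what forces $R(D)\subseteq\Tor(\Gamma)\times\bT^n$ — and in that case $B(\Gamma,U(m))_D/U(m)\cong(\Tor(\Gamma)\times\bT^n)/R(D)$, equivariantly for the translation action.

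Next comes the general case $m=l\cdot\sigma(D)$, handled by the same device as Theorem~\ref{thm:BUnmod}: by \cite{ACh}, Corollary 3.10, a $D$-commuting tuple in $B_{s+n}(U(m))_D$ is, up to conjugation, a direct sum of $l$ blocks each lying in $B_{s+n}(U(\sigma(D)))_D$, so that $B_{s+n}(U(m))_D/U(m)\cong SP^l\bigl(B_{s+n}(U(\sigma(D)))_D/U(\sigma(D))\bigr)$. Since $A_i^{k_i}=I_m$ holds if and only if it holds on each block, this identification restricts to $B(\Gamma,U(m))_D/U(m)\cong SP^l\bigl(B(\Gamma,U(\sigma(D)))_D/U(\sigma(D))\bigr)\cong SP^l\bigl((\Tor(\Gamma)\times\bT^n)/R(D)\bigr)$ by the previous paragraph, which is the first displayed formula. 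For the second, I trace the residual action: replacing $(A_1,\dots,A_{s+n})$ by $(\theta_1 A_1,\dots,\theta_{s+n}A_{s+n})$ shifts the eigenvalue $c_i$ of every block by $\theta_i$, so on $SP^l\bigl((\Tor(\Gamma)\times\bT^n)/R(D)\bigr)$ the $(\Tor(\Gamma)\times\bT^n)$-action is the simultaneous translation on all unordered coordinates through $\Tor(\Gamma)\times\bT^n\to(\Tor(\Gamma)\times\bT^n)/R(D)$; the quotient is by definition the reduced symmetric product $\widehat{SP^{m/\sigma(D)}}\bigl((\Tor(\Gamma)\times\bT^n)/R(D)\bigr)$, as claimed.

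The step I expect to demand the most care is the passage through the symmetric product: one must check that the torsion conditions $A_i^{k_i}=I_m$ are compatible both with the block decomposition of \cite{ACh}, Corollary 3.10 and with its $\Sigma_l$-quotient, and that the residual torus action really is the diagonal one. Because the block decomposition is canonical only up to the conjugation and reordering used to produce it, I would argue throughout in terms of the conjugation-invariant eigenvalue data $c_i$ of Theorem~\ref{thm:eigen} rather than the blocks themselves. A minor point to dispatch along the way is that the projection $\bT^{s+n}/R(D)\to\prod_j\bS^1/R_j(D)$ need not be injective, which is harmless here only because the defining conditions of $B(\Gamma,U(m))_D$ factor through it.
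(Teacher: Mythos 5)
Your proof is correct and follows exactly the route the paper intends: the paper's own justification is the one-line observation that $A_i^{k_i}=I_m$ is an eigenvalue condition, combined with Theorem \ref{thm:eigen}, the block decomposition of \cite{ACh}, Corollary 3.10, and the principal-bundle lemma, which is precisely the argument you spell out. The details you supply — identifying the torsion locus in $\bT^{s+n}/R(D)$ with the image of $\Tor(\Gamma)\times\bT^n$, and recognizing $r_i(D)\mid k_i$ as the non-emptiness condition (equivalently $R(D)\subseteq\Tor(\Gamma)\times\bT^n$) — are accurate.
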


\begin{remark}
	Suppose $\sigma(D)\big{|}m$ and $r_i(D)\big{|}k_i$. Let $H$ be the cokernel of the composition 
	$R(D) \hookrightarrow \Tor(\Gamma)\times \bT^n \stackrel{proj}{\longrightarrow} \Tor(\Gamma)$
	of inclusion followed by projection. Then, if we focus on the images of the components associated to $D$ under
	the quotient 
	$$B(\Gamma, U(m))\to Hom (\Gamma, PU(m)),$$
	we have
	$$\pi_0(Hom(\Gamma, PU(m))_D) \cong \pi_0(Rep(\Gamma, PU(m))_D) \cong \widehat{SP^{m\over\sigma(D)}}(H).$$
The number of orbits on the right hand side can be computed using Burnside's lemma:

\[\left|\widehat{SP^{m\over\sigma(D)}}(H)\right|=\dfrac{1}{|H|}\sum_{g\in H}|SP^{m\over\sigma(D)}(H)^g|\]

where

\[|SP^{m\over\sigma(D)}(H)^g|=\begin{cases} \dbinom{\frac{m+|H|\sigma(D)}{|g|\sigma(D)}-1}{\frac{m}{|g|\sigma(D)}} & |g|\sigma(D) \mid m\\
0 & |g|\sigma(D) \nmid m \end{cases}
\]                                                 
\end{remark}

\begin{example}[Projective representations of finite abelian groups]
Our results allow us to recover results about projective representations of finite abelian groups. 
Consider a finite abelian group $\Gamma=\bZ/k_1\oplus\ldots\oplus \bZ/k_s$, where $k_i$ divides $k_{i+1}$ for $i=1,\ldots,s-1$. 
By Theorem \ref{thm:repgammaum}, the space of degree $m$ projective unitary representation modulo projective equivalence is given by
	$$Rep(\Gamma, PU(m)) \cong \coprod\limits_{\substack{D\in T(s,\bZ/m\bZ)\\ 
	\sigma(D)\big{|}m~~{\rm and}~~r_i(D)\big{|}k_i}}
\widehat{SP^{m\over\sigma(D)}}(\Gamma/R(D)).$$
If $\sigma(D)=m$,   then
$\widehat{SP^{m\over\sigma(D)}}(\Gamma/R(D))$ 
is a single point and corresponds to an irreducible projective representation. 

\medskip

Removing the restriction on the dimension, we see that 
projective equivalence classes of irreducible projective representations of $\Gamma$ are in
one--to--one correspondence with the $D\in \TnQZ$ such that $r_i(D)|k_i$ for any $i=1,\ldots,s$. Note that
since $D$ is skew-symmetric, the condition $r_i(D)|k_i$ is equivalent to $|d_{ij}|=|-d_{ji}|$ divides $k_i$ for $i<j$. 

\medskip

In terms of cohomology, this indexing can be seen to arise from pulling back using the projection $p:\bZ^s\to\Gamma$, which
yields a factorization
$$H^2(BPU(m),\bZ/m\bZ)\to H^2(\Gamma, \bZ/m\bZ)\to H^2(\bZ^s, \bZ/m\bZ).$$
Note that $p^*:H^2(\Gamma, \bQ/\bZ)\to H^2(\bZ^s, \bQ/\bZ)$ is injective, therefore if we consider all possible 
dimensions $m$ we see that the total indexing is in one-to-one correspondence with elements in $H^2(\Gamma, \bQ/\bZ)\cong H^3(\Gamma, \bZ)$.
Moreover, for the projective equivalence class corresponding to $D$, its set of representatives, up to linear equivalence, is indexed by $\Gamma/R(D)$, and each such representation has degree $\sigma(D)$.


\end{example}

\section{Projective Representations and Flat Bundles}

We now reformulate the computation of path components using homotopy theory, following the approach
in \cite{AC}, Lemma 2.5.
Recall that given a group homomorphism $h: \Gamma \to G$, it induces a continuous (pointed) 
map on classifying spaces 
$Bh: B\Gamma \to BG$.
For $G$ a compact connected Lie group, the 
correspondence 
$$Hom(\Gamma, G)\to Map_*(B\Gamma, BG)$$ 
is continuous and so induces a map on path components. As conjugation by $G$ is homotopically
trivial on $BG$, it gives rise to a map
$$\pi_0(Rep(\Gamma, G))\to [B\Gamma, BG].$$
If $X$ is a path--connected 
CW--complex with
$\pi_1(X) = \Gamma$, then composing with
the classifying map $c: X\to B\pi_1(X)$ of the universal cover 
$\tilde{X}\to X$ we obtain a map
$$\Psi^G_X: \pi_0(Rep(\pi_1(X), G))\to [X, BG]$$ 
which measures the flat principal $G$--bundles
on $X$.

\medskip

In the case when $G = PU(m)$ we have a canonical homotopy class 
$$\Omega : BPU(m)\to K(\bZ/m\bZ, 2)$$
associated to the central extension 
$$1\to \bZ /m \to SU(m) \to PU(m)\to 1.$$ 
For a CW--complex $X$, this 
gives rise to the composition
$$\pi_0(Rep(\pi_1(X), PU(m))) \xrightarrow{\Psi^{PU(m)}_X} [X, BPU(m)] \xrightarrow{\Omega} [X, K(\bZ/m\bZ,2)] = H^2(X, \bZ/m\bZ).$$
We have

\begin{theorem**}
Let $X$ denote a connected CW--complex with $\pi_1(X) = \bZ^n$; then the 
map $\Psi^{PU(m)}_X: \pi_0(Rep(\pi_1(X), PU(m)) \to [X, BPU(m)]$ is injective for all $n,m \ge 1$ and so there are 
$N(n,m)$ distinct isomorphism classes of flat principal $PU(m)$--bundles on $X$.
\end{theorem**}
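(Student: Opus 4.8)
The plan is to detect path components after applying the canonical cohomology class, exploiting that the composite $\Omega\circ\Psi^{PU(m)}_X$ is assembled from maps already analysed in Section~2. Since $\Omega\colon BPU(m)\to K(\bZ/m\bZ,2)$ represents the canonical class $\nu\in H^2(BPU(m),\bZ/m\bZ)$ associated to the central extension $1\to\bZ/m\bZ\to SU(m)\to PU(m)\to 1$, and $\Psi^{PU(m)}_X$ sends the class of a homomorphism $h\colon\bZ^n\to PU(m)$ to $[Bh\circ c]$, where $c\colon X\to B\bZ^n=\bT^n$ is the classifying map of the universal cover, the composite
$$\pi_0(Rep(\bZ^n,PU(m)))\xrightarrow{\ \Psi^{PU(m)}_X\ }[X,BPU(m)]\xrightarrow{\ \Omega\ }H^2(X,\bZ/m\bZ)$$
carries $[h]$ to $c^*(Bh)^*\nu=c^*(h^*\nu)=c^*(\alpha)$, where $\alpha=h^*\nu\in H^2(\bZ^n,\bZ/m\bZ)$. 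Since $\Omega$ is merely a map of sets, it suffices to show $\Omega\circ\Psi^{PU(m)}_X$ is injective, and by the formula above this reduces to two assertions: (i) the assignment $[h]\mapsto\alpha$ is injective on $\pi_0(Rep(\bZ^n,PU(m)))$, and (ii) $c^*\colon H^2(\bT^n,\bZ/m\bZ)\to H^2(X,\bZ/m\bZ)$ is injective.

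For (i) I would simply quote the description of $Rep(\bZ^n,PU(m))$ from Section~2: by Theorem~A together with the cohomological labelling of its components, $\pi_0(Rep(\bZ^n,PU(m)))$ is in bijection with $\{D\in T(n,\bZ/m\bZ):\sigma(D)\mid m\}\subset T(n,\bZ/m\bZ)\cong H^2(\bZ^n,\bZ/m\bZ)$, and under this bijection the component containing $h$ is sent exactly to $\alpha=h^*\nu$. Hence distinct components have distinct associated classes, which is (i). One small point worth spelling out here is that $[h]\mapsto h^*\nu$ is genuinely well defined on $\pi_0(Rep(\bZ^n,PU(m)))$ and agrees with that labelling; this follows from the continuity of $h\mapsto Bh$ and the homotopy triviality of conjugation on $BPU(m)$, both already invoked in Section~2.

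For (ii), the input is that $\pi_1(X)=\bZ^n$ forces $c\colon X\to\bT^n$ to induce an isomorphism on $H_1(-;\bZ)$ and a surjection on $H_2(-;\bZ)$ (Hopf's theorem, as $c$ classifies the universal cover). Because $H_*(\bT^n;\bZ)$ is free abelian, the universal coefficient theorem gives $H^2(\bT^n;\bZ/m\bZ)\cong\Hom(H_2(\bT^n),\bZ/m\bZ)$, while the surjection $H_2(X)\twoheadrightarrow H_2(\bT^n)$ dualizes to an injection $\Hom(H_2(\bT^n),\bZ/m\bZ)\hookrightarrow\Hom(H_2(X),\bZ/m\bZ)$. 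A diagram chase with the naturality of the universal coefficient short exact sequence (using that the $\mathrm{Ext}(H_1,-)$ term vanishes for $\bT^n$) then identifies $c^*$ with the composite of these maps, proving (ii). Combining (i) and (ii), $\Omega\circ\Psi^{PU(m)}_X$ is a composite of injections, hence injective, so $\Psi^{PU(m)}_X$ is injective; and since $Rep(\bZ^n,PU(m))$ has $N(n,m)$ path components by Theorem~A, its image consists of $N(n,m)$ distinct isomorphism classes of principal $PU(m)$--bundles, each admitting a flat structure.

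I expect there to be no genuinely hard step: the entire argument is formal once Section~2 is in hand, and the only point requiring a short independent verification is the elementary topological fact (ii), the injectivity of $c^*$ on $H^2$ with $\bZ/m\bZ$ coefficients. The conceptual heart is really the observation that the single invariant $\alpha=h^*\nu$ both separates the components of $Rep(\bZ^n,PU(m))$ and survives pullback to $X$.
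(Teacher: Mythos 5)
Your proposal is correct and follows essentially the same route as the paper: factor through $\Omega$, use the Section~2 labelling of components by $\alpha=h^*\nu$ to get injectivity over $\bT^n$, and then pull back along the classifying map $c$ using injectivity of $c^*$ on $H^2(-,\bZ/m\bZ)$. The only (immaterial) difference is that you establish the injectivity of $c^*$ via Hopf's theorem and the universal coefficient theorem, whereas the paper reads it off from the five-term exact sequence of the fibration $\tilde{X}\to X\to B\pi_1X$ (using $H^1(\tilde X)=0$); both arguments are standard and equivalent here.
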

\begin{proof}
First we consider the basic case $X=B\bZ^n = \bT^n$: the composition
$$\pi_0(Rep(\bZ^n, PU(m))) \xrightarrow{\Psi^{PU(m)}_{\bT^n}} [\bT^n, BPU(m)] \xrightarrow{\Omega} [\bT^n, K(\bZ/m\bZ,2)] = H^2(B\bZ^n, \bZ/m\bZ)$$
is the map on path components described in Section 2. We know that its image has precisely
$N(n,m)$ elements, corresponding to the skew symmetric matrices $D$ with $\sigma (D)$ dividing $m$.
In other words, this map distinguishes components, and is injective. If
$$c^*: H^2(B\bZ^n, \bZ/m\bZ)\to H^2(X, \bZ/m\bZ)$$ 
is induced by the classifying map, by naturality
we have
$$c^*\circ \Omega \circ \Psi^{PU(m)}_{\bT^n} = \Omega \circ \Psi^{PU(m)}_{X}.$$
From the five-term exact sequence in cohomology for the fibration
$\tilde{X}\to X\to B\pi_1X$
we infer that
$c^*: H^2(B\pi_1(X),\bZ/m\bZ)\to H^2(X, \bZ/m\bZ)$ is in fact injective. Therefore the composition
$\Omega \circ \Psi^{PU(m)}_{X}$ is injective and so is $\Psi^{PU(m)}_{X}$.
\end{proof}


\medskip

Next we study the surjectivity of the map 
\[\Psi^{PU(m)}_{\bT^n}: \pi_0(Rep(\bZ^n, PU(m)) \to [\bT^n, BPU(m)].\]
We begin with the following lemma

\begin{lemma}
Let $m\geq 2$. If $n\le 3$, $[\bT^n, BPU(m)]$ is finite, of cardinality equal to that of $\Tnm$. If $n\ge 4$, the set $[\bT^n, BPU(m)]$ has infinitely many elements. 
\end{lemma}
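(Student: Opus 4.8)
The plan is to compute $[\bT^n, BPU(m)]$ directly using the Postnikov/obstruction-theoretic structure of $BPU(m)$ in low degrees, together with the standard fact that $BPU(m)$ has homotopy groups $\pi_2 \cong \bZ/m\bZ$, $\pi_3 \cong 0$, $\pi_4 \cong \bZ$ (coming from $\pi_3 PU(m) \cong \pi_3 SU(m) \cong \bZ$). So through dimension $3$, $BPU(m)$ looks like $K(\bZ/m\bZ, 2)$, while the first obstruction to extending a map past the $3$-skeleton lives in $H^4$ with $\bZ$ coefficients (twisted trivially here since $\pi_1 \bT^n = \bZ^n$ acts trivially on all higher homotopy of $BPU(m)$).

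\textbf{Case $n \le 3$.} Here $\bT^n$ is a CW-complex of dimension $n \le 3$, so any map $\bT^n \to BPU(m)$ factors (up to homotopy) through the $3$-rd Postnikov stage of $BPU(m)$, which is $K(\bZ/m\bZ, 2)$. Moreover two maps into $BPU(m)$ that agree into $K(\bZ/m\bZ,2)$ differ by obstructions in $H^i(\bT^n, \pi_i BPU(m))$ for $i \ge 3$; since $\pi_3 BPU(m) = 0$ and $H^i(\bT^n, -) = 0$ for $i > n \ge i$ only when... more carefully: the difference obstructions for $i=3$ vanish because $\pi_3 = 0$, and for $i \ge 4$ they vanish because $H^i(\bT^n, -) = 0$ when $n \le 3$. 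Hence the map $[\bT^n, BPU(m)] \to [\bT^n, K(\bZ/m\bZ,2)] = H^2(\bT^n, \bZ/m\bZ) = H^2(B\bZ^n, \bZ/m\bZ) \cong T(n,\bZ/m\bZ) = \Tnm$ is a bijection, giving finiteness with the stated cardinality. (One should double-check surjectivity: every class in $H^2(\bT^n,\bZ/m\bZ)$ lifts to a map to $BPU(m)$ because the only obstruction to lifting past $K(\bZ/m\bZ,2)$ lies in $H^4(\bT^n, \bZ)$, which is zero for $n \le 3$.)

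\textbf{Case $n \ge 4$.} For $n \ge 4$ the group $H^4(\bT^n, \bZ)$ is free abelian of rank $\binom{n}{4} \ge 1$. The strategy is to exhibit infinitely many distinct homotopy classes by controlling the first nonzero $k$-invariant / characteristic class in $H^4$. Concretely, I would use that $BU(m) \to BPU(m)$ and the fibration $BU(1) \to BU(m) \to BPU(m)$ (or the known structure of $BPU(m)$) produce, for the universal bundle, a class in $H^4(BPU(m), \bZ)$ restricting nontrivially; pulling back along maps $\bT^n \to BPU(m)$ gives a function $[\bT^n, BPU(m)] \to H^4(\bT^n, \bZ)$ whose image contains an infinite set. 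The cleanest route: take the $4$-skeleton $K(\bZ/m\bZ,2)^{(4)}$ or rather observe that $\bT^4 \subset \bT^n$ and a map $S^4 \to BPU(m)$ representing (a multiple of) a generator of $\pi_4 BPU(m) \cong \bZ$ exists; collapsing $\bT^n \to \bT^n/\bT^{n}_{(3)} \simeq \bigvee S^4 \vee (\text{higher cells})$ onto one top-ish $S^4$ and composing with these infinitely many maps $S^4 \to BPU(m)$ yields infinitely many homotopy classes, distinguished by their image in $\pi_4 BPU(m) = \bZ$ under the appropriate collapse. The main obstacle here is bookkeeping: one must be careful that distinct elements of $\pi_4 BPU(m)$ really do give distinct elements of $[\bT^n, BPU(m)]$ (the collapse map $\bT^n \to S^4$ has a section up to homotopy on the relevant summand, or one uses that $H^4(\bT^n,\bZ) \to H^4(S^4, \bZ)$ is split surjective, so the composites are detected on $H^4$), and that the relevant class in $H^4(BPU(m),\bZ)$ pulls back nontrivially from $\pi_4$ — equivalently, that the Hurewicz-type map $\pi_4 BPU(m) \to H_4(BPU(m))$ is nonzero after tensoring with $\bQ$, which holds since rationally $BPU(m) \simeq BSU(m)$ in this range and $H^4(BSU(m),\bQ) \cong \bQ$ detects $c_2$.

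\textbf{Main obstacle.} The genuinely delicate point is the $n \ge 4$ direction: producing and distinguishing infinitely many classes requires knowing that the $H^4(-,\bZ)$-valued characteristic class on $PU(m)$-bundles over $\bT^n$ takes infinitely many values, which I would extract from the rational equivalence $BPU(m) \simeq_{\bQ} BSU(m)$ combined with the splitting of $H^4(\bT^n,\bZ) \to H^4(S^4,\bZ)$; assembling this into a clean argument, rather than the $n \le 3$ obstruction-theory bookkeeping, is where the real work lies.
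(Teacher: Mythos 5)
Your proposal is correct, and the two halves compare differently with the paper. For $n\ge 4$ you follow essentially the same route the paper takes: collapse $\bT^4$ onto $S^4$ via a degree--one map on $H_4$, use $\pi_4(BPU(m))\cong\pi_4(BSU(m))\cong\bZ$, and distinguish the resulting composites by the fact that the Hurewicz image of $\pi_4(BPU(m))$ in $H_4(BPU(m))$ is nonzero (the paper phrases this as the monomorphism $H_4(BSU(m),\bZ)\to H_4(BPU(m),\bZ)$; your rational equivalence $BPU(m)\simeq_{\bQ}BSU(m)$ in this range is the same fact). For $n\le 3$, however, the paper simply cites Woodward's classification of principal $PU(m)$--bundles over low--dimensional complexes, whereas you give a self--contained obstruction--theoretic argument: since $\pi_2(BPU(m))\cong\bZ/m\bZ$, $\pi_3(BPU(m))=0$, the map $BPU(m)\to P_3(BPU(m))=K(\bZ/m\bZ,2)$ is $4$--connected, so it induces a bijection $[\bT^n,BPU(m)]\cong H^2(\bT^n,\bZ/m\bZ)$ for $\dim\bT^n=n\le 3$; this set has the cardinality of $\Tnm$. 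That argument is valid (one sentence in your write--up starts to garble the vanishing ranges before you correct it, and a final version should state cleanly that the lifting and difference obstructions lie in $H^{i}(\bT^n,\pi_i BPU(m))$ which vanish for $i=3$ because $\pi_3=0$ and for $i\ge 4$ because $n\le 3$). What each approach buys: yours avoids an external citation and makes transparent exactly why dimension $4$ is the threshold (it is where $\pi_4(BPU(m))=\bZ$ first enters), while the paper's appeal to Woodward is shorter and connects to the classification results it uses again later for the surjectivity discussion.
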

\begin{proof}
This follows from Woodward's classification of principal $PU(m)$--bundles for low dimensional
complexes (see \cite{W}, page 514). For $\bT^n$, $n\le 3$ he shows that the map
$$[\bT^n, BPU(m)] \to H^2(\bT^n, \bZ/m\bZ)\cong \Tnm$$ 
is a bijection. We outline a direct proof that $[\bT^n, BPU(m)]$ must be infinite 
for $n\ge 4$.
For $n=4$, there is a 
map $\Phi: \bT^4 \to \bS^4$ which induces an isomorphism on $H_4$. This arises from using the 
4-dimensional cell in a CW-complex decomposition for $\bT^4$ from its structure as a product
of circles, each having a single 0-cell and a single 1-cell. The map
$BSU(m)\to BPU(m)$ induces an isomorphism $\bZ = \pi_4(BSU(m))\cong \pi_4(BPU(m))$ and so it is possible to choose a map $\rho: \bS^4\to BPU(m)$ realizing this isomorphism.  The Hurewicz map
$\pi_4(BPU(m))\to H_4(BPU(m),\bZ)$ can be identified with the monomorphism $H_4(BSU(m),\bZ)\to H_4(BPU(m),\bZ)$, hence the composition 
$\rho\circ\Phi:\bT^4\to BPU(m)$ is a map inducing an injection on the fundamental class in $\bT^4$.
It follows that $[\bT^4, BPU(m)]$ cannot be finite. For 
$n\ge 4$, we can use the split surjection $[\bT^n, BPU(m)]\to [\bT^4, BPU(m)]$ to verify the claim.
\end{proof}

\begin{proposition}\label{prop:surj}
Let $m\geq 2$, then $\Psi^{PU(m)}_{\bT^n} : \pi_0(Rep(\bZ^n, PU(m)) \to [\bT^n, BPU(m)]$ is surjective if and
only if $n\leq 3$.
\end{proposition}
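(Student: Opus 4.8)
The plan is to prove both implications separately, using the preceding Lemma as the main input. For the ``if'' direction, suppose $n \le 3$. By Woodward's classification (the Lemma), the natural map $[\bT^n, BPU(m)] \to H^2(\bT^n, \bZ/m\bZ) \cong \Tnm$ induced by $\Omega$ is a bijection. On the other hand, Theorem A (together with the identification of the labelling of components by cohomology classes) tells us that $\pi_0(Rep(\bZ^n, PU(m)))$ maps onto precisely the set of $D \in \Tnm$ with $\sigma(D) \mid m$. So to get surjectivity of $\Psi^{PU(m)}_{\bT^n}$ it suffices to observe that for $n \le 3$ the condition $\sigma(D) \mid m$ is automatic for every skew--symmetric $D$ over $\bZ/m\bZ$: indeed, for $n \le 3$ any such $D$ can be put (via a change of basis over $\bZ$, using Proposition 4.1 of \cite{RV}) into the normal form $c_1 g_1^* g_2^*$ with a single block, so $\sigma(D) = |c_1|$, which divides $m$. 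Hence the composite $\Omega \circ \Psi^{PU(m)}_{\bT^n}$ is onto $\Tnm$, and since $\Omega$ is a bijection, $\Psi^{PU(m)}_{\bT^n}$ is onto $[\bT^n, BPU(m)]$.

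For the ``only if'' direction, suppose $n \ge 4$. I would argue by cardinality: the domain $\pi_0(Rep(\bZ^n, PU(m)))$ has exactly $N(n,m)$ elements, which is finite, whereas by the Lemma the target $[\bT^n, BPU(m)]$ is infinite when $n \ge 4$ and $m \ge 2$. A map from a finite set to an infinite set cannot be surjective, so $\Psi^{PU(m)}_{\bT^n}$ is not surjective. This completes the equivalence.

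The bulk of the work has already been done in the cited results, so the only genuine content here is the observation that skew--symmetric matrices of size $\le 3$ over $\bZ/m\bZ$ always have $\sigma(D) \mid m$; I expect that to be the one point needing a sentence of justification rather than a citation. Everything else is assembling the Lemma, Theorem A, and the finiteness of $N(n,m)$. The main (minor) obstacle is just being careful that the ``labelling by cohomology'' discussion after Theorem A really does identify the image of $\Omega \circ \Psi^{PU(m)}_{\bT^n}$ with $\{D : \sigma(D) \mid m\}$, which it does by the paragraph following Theorem A and by the proof of Theorem B already given above.

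\begin{proof}
Suppose first that $n \le 3$. By the Lemma, the map $[\bT^n, BPU(m)] \to H^2(\bT^n, \bZ/m\bZ) \cong \Tnm$ induced by $\Omega$ is a bijection. It therefore suffices to show that the composite
$$\pi_0(Rep(\bZ^n, PU(m))) \xrightarrow{\Psi^{PU(m)}_{\bT^n}} [\bT^n, BPU(m)] \xrightarrow{\Omega} \Tnm$$
is surjective. As explained in the discussion following Theorem A and in the proof of Theorem B, the image of this composite is exactly the set of $D \in \Tnm$ with $\sigma(D)$ dividing $m$. But for $n \le 3$ every skew--symmetric $D$ over $\bZ/m\bZ$ can, by \cite[Proposition 4.1]{RV}, be brought by a change of $\bZ$--basis into the form $c_1 g_1^* g_2^*$ with a single block (since $2r \le n \le 3$ forces $r \le 1$), whence $\sigma(D) = |c_1|$, which divides $m$. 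Thus the composite is onto $\Tnm$, and since $\Omega$ is a bijection, $\Psi^{PU(m)}_{\bT^n}$ is surjective.

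Conversely, suppose $n \ge 4$. The domain $\pi_0(Rep(\bZ^n, PU(m)))$ is a finite set of cardinality $N(n,m)$. By the Lemma, for $m \ge 2$ and $n \ge 4$ the target $[\bT^n, BPU(m)]$ is infinite. A map from a finite set to an infinite set cannot be surjective, so $\Psi^{PU(m)}_{\bT^n}$ is not surjective. This proves the proposition.
\end{proof}
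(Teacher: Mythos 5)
Your proof is correct and follows essentially the same route as the paper: for $n\le 3$ one checks (via the normal form of \cite[Proposition 4.1]{RV}, which the paper calls ``using an appropriate basis'') that every $D\in\Tnm$ satisfies $\sigma(D)\mid m$, then combines this with Woodward's bijection from the Lemma; for $n\ge 4$ one uses the finiteness of $N(n,m)$ against the infinitude of $[\bT^n,BPU(m)]$. Your write-up actually spells out the $r\le 1$ observation that the paper leaves implicit, which is a small improvement in completeness.
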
  
\begin{proof}
From the definition in cohomology (and using an appropriate basis when $n=3$) it is easy to see that for $n=1,2,3$, we have that 
$\{D\in\Tnm:\sigma(D)~\big{|}~m\}=\Tnm$.
Thus we conclude that $\Omega\circ\Psi^{PU(m)}_{\bT^n}$ is a bijection and
therefore by a cardinality argument so is $\Psi^{PU(m)}_{\bT^n}$ for $n=1,2,3$. 
For $n\ge 4$ we have verified that $[\bT^n, BPU(m)]$ is infinite, whence the result follows.
\end{proof}


\begin{corollary}
There exists a principal $PU(m)$-bundle on the $n$--torus $\bT^n$ which does not admit 
a flat structure if and only if $n\ge 4$.
\end{corollary}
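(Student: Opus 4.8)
The plan is to deduce this corollary directly from Proposition \ref{prop:surj} together with the interpretation of the map $\Psi^{PU(m)}_{\bT^n}$ as the comparison between flat bundles and all principal bundles. First I would recall that, as explained in Section 3, the image of $\Psi^{PU(m)}_{\bT^n}$ inside $[\bT^n, BPU(m)]$ consists precisely of the isomorphism classes of principal $PU(m)$--bundles over $\bT^n$ that admit a flat structure; indeed a class lies in the image exactly when it is realized by some homomorphism $\pi_1(\bT^n)=\bZ^n\to PU(m)$, i.e. by a flat bundle. Hence the assertion ``every principal $PU(m)$--bundle on $\bT^n$ admits a flat structure'' is literally the statement that $\Psi^{PU(m)}_{\bT^n}$ is surjective.

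With that translation in hand the corollary is immediate from Proposition \ref{prop:surj}: for $n\le 3$ the map $\Psi^{PU(m)}_{\bT^n}$ is surjective, so every principal $PU(m)$--bundle over $\bT^n$ is flat and no counterexample exists; for $n\ge 4$ the map is not surjective, so there is a class in $[\bT^n, BPU(m)]$ not in its image, i.e. a principal $PU(m)$--bundle over $\bT^n$ with no flat structure. One should also dispose of the trivial case $m=1$ separately, where $PU(1)$ is trivial and the statement is vacuous (or simply note that the corollary as phrased is about the existence of nonflat bundles, which already fails to occur for $m=1$ and all $n$, consistently with the fact that there is nothing to classify); the interesting content is for $m\ge 2$, which is exactly the hypothesis of Proposition \ref{prop:surj}.

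There is essentially no obstacle here — the work has all been done in the Lemma and Proposition \ref{prop:surj}, whose crux was Woodward's classification showing $[\bT^n,BPU(m)]$ is infinite for $n\ge 4$ while $\pi_0(Rep(\bZ^n,PU(m)))$ is always finite. The only point requiring a sentence of care is making explicit that ``admits a flat structure'' is synonymous with ``lies in the image of $\Psi^{PU(m)}_{\bT^n}$'', which follows because a flat structure on a principal $G$--bundle over $\bT^n$ is the same as a conjugacy class of homomorphisms $\bZ^n\to G$ inducing it, and $\Psi^{PU(m)}_{\bT^n}$ is by construction the map sending such a conjugacy class to the underlying bundle. Thus the proof is a two-line citation of Proposition \ref{prop:surj} once this identification is recorded.
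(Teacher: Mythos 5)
Your argument is correct and coincides with the paper's: the corollary is stated without a separate proof precisely because it is the immediate translation of Proposition \ref{prop:surj}, with surjectivity of $\Psi^{PU(m)}_{\bT^n}$ identified with the statement that every principal $PU(m)$--bundle on $\bT^n$ admits a flat structure. Your remark that the statement implicitly assumes $m\ge 2$ (since $PU(1)$ is trivial) is a fair, minor clarification and does not change the argument.
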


		


\section*{Acknowledgements}
The first author was funded by NSERC. We are grateful to 
M. Bergeron, J.M. G\'omez, Z. Reichstein and B. Williams for their helpful comments.

\bibliographystyle{plain} 

\end{document}